\let\Gamma\varGamma
\patchcmd{\section}{\scshape}{\bfseries}{}{}
\renewcommand{\@secnumfont}{\bfseries}
\newcommand{\Rb}{\mathbb{R}}
\newcommand{\ee}{\mathrm{e}}
\newcommand{\<}{\langle}
\renewcommand{\>}{\rangle}
\newcommand{\der}{\mathrm{d}}
\renewcommand{\div}{\mathrm{div}}
\newcommand{\gM}{\s}
\DeclareMathOperator{\Ric}{Ric}
\newcommand{\gind}{g}
\newcommand{\piR}{\pi_{\Rb}}
\newcommand{\A}{A}
\newcommand{\Hv}{\vv{H}} 			% mean curvature vector
\DeclareSymbolFont{UPM}{U}{eur}{m}{n}
\DeclareMathSymbol{\partial}{0}{UPM}{"40}
\newtheorem{theorem}{Theorem}
\newtheorem{lemma}[theorem]{Lemma}
\newtheorem{corollary}[theorem]{Corollary}
\newtheorem{proposition}[theorem]{Proposition}
\theoremstyle{definition}
\newtheorem{remark}[theorem]{Remark}
\newtheorem*{remark*}{Remark}
\newtheorem{definition}[theorem]{Definition}
\newtheorem*{definition*}{Definition}
\numberwithin{equation}{section}
\numberwithin{theorem}{section}
\DeclareMathOperator{\Max}{max}
\newcommand{\wh}{\widehat}
\newcommand{\wt}{\widetilde}
\renewcommand{\d}{\delta}
\renewcommand{\l}{\lambda}
\newcommand{\ve}{\varepsilon}
\newcommand{\s}{\sigma}
\global\long\def\norm#1{\left\Vert #1\right\Vert }
\global\long\def\abs#1{\left|#1\right|}
\renewcommand{\r}{\mathbb{R}} % Bem: \r erzeugt im Original einen (Textmodus-)Ring
\newcommand{\N}{\mathbb{N}}
\DeclareMathOperator{\graph}{graph}
\renewcommand{\le}{\leq}
\newcommand{\p}{\partial}
\newcommand{\n}{\nabla}
\newcommand{\ChrM}{\prescript{\s}{}\!\Gamma} % Christoffel symbosl on M
\def\R{\mathbb{R}}
\def\eps{\varepsilon}
\renewcommand{\d}{\mathrm{d}}
\title[Mean curvature flow in asymptotically flat product spacetimes]{Mean curvature flow in\\asymptotically flat product spacetimes}
\author{Klaus Kr\"oncke}
\author{Oliver Lindblad Petersen}
\author{Felix Lubbe}
\author{Tobias Marxen}
\author{Wolfgang Maurer}
\author{Wolfgang Meiser}
\author{Oliver C.\ Schn\"{u}rer}
\author{\'Aron Szab\'o}
\author{Boris Vertman}
\address{Klaus Kr\"oncke, \'Aron Szab\'o: Universit\"at Hamburg, Fachbereich Mathematik, Bundesstr.\ 55, D-20146 Hamburg}
\email{klaus.kroencke@uni-hamburg.de, aron.szabo@uni-hamburg.de}
\address{Oliver Lindblad Petersen: Stanford University, Department of Mathematics, 450 Jane Stanford Way, Stanford, California 94305}
\email{oliverlp@stanford.edu}
\address{Felix Lubbe: University of Copenhagen, Department of Mathematical Sciences, Universitetsparken 5, DK-2100 Copenhagen {\O}}
\email{felix-lubbe@math.ku.dk}
\address{Tobias Marxen, Boris Vertman: Universit\"at Oldenburg, Institut f\"ur Mathematik, Carl-von-Ossietzky-Str.\ 9--11, D-26129 Oldenburg}
\email{tobias.marxen@uol.de, boris.vertman@uol.de}
\address{Wolfgang Maurer, Oliver C.\ Schn\"urer: Universit\"at Konstanz, Fachbereich Mathematik und Statistik, Universit\"atsstr.\ 10, D-78464 Konstanz}
\email{wolfgang.maurer@uni-konstanz.de, oliver.schnuerer@uni-konstanz.de}
\address{Wolfgang Meiser: Universit\"at Magdeburg, Institut f\"ur Analysis und Numerik, Universit\"atsplatz 2, D-39106 Magdeburg}
\email{wolfgang.meiser@uvgu.de}
\date{\today}
\keywords{mean curvature flow, asymptotically flat manifolds, static spacetimes}
\subjclass[2020]{Primary 53E10; Secondary 53C50; 58J35.}
\begin{document}

\begin{abstract}
	We consider the long-time behaviour of the mean curvature flow of spacelike hypersurfaces in the Lorentzian product manifold $M\times\Rb$,
	where $M$ is asymptotically flat.
	If the initial hypersurface $F_0\subset M\times \R$ is uniformly spacelike and asymptotic to $M\times\left\{s\right\}$ for some $s\in\R$ at infinity, we show that a mean curvature flow starting at $F_0$ exists for all times and converges uniformly to $M\times\left\{s\right\}$ as $t\to \infty$.
\end{abstract}

\maketitle
\tableofcontents

%%%%%%%%%%%%%%%%%%%%%%%%%%%%%%%%%%%
% INTRODUCTION AND MAIN RESULTS
%%%%%%%%%%%%%%%%%%%%%%%%%%%%%%%%%%%
\section{Introduction and main results}

The mean curvature flow is a $1$-parameter family $F_t$ of embedded submanifolds of a semi-Riemannian manifold that evolves in the direction of its mean curvature vector, that is, $F_t$ satisfies
\[
\partial_t F(x,t) = \Hv(x,t) \,, \qquad F(x,0) = F_0(x)
\]
for all $x\in M$ and $t\in[0,T)$ for some $T>0$.
Especially the case of hypersurfaces of Riemannian manifolds has been studied extensively in the literature, see e.\,g.\ \cite{colding2015mean} for a recent survey.

Another interesting case that has been studied less often is the case of hypersurfaces in globally hyperbolic Lorentzian manifolds. 
In this case,
the mean curvature flow is a parabolic equation precisely if the immersion is spacelike. Further, if the ambient space is a product manifold $M\times\Rb$, 
the flow is given entirely in terms of a function
$u:M\times[0,T)\to\Rb$, subject to the equation
\[
	\partial_t u = \sqrt{1 - |\nabla u|_{\sigma}^2} \, \div_{\sigma}\left( \frac{\prescript{\sigma}{}\nabla u}{\sqrt{1-|\nabla u|_{\sigma}^2}} \right) \,,
\]
where $\sigma$ is the metric on $M$ and the norms and derivative are taken with respect to $\sigma$.
 Ecker and Huisken considered the mean curvature flow in cosmological spacetimes satisfying the timelike convergence condition to find hypersurface of prescribed mean curvature \cite{EH91b}. However, they had to assume a structural monotonicity condition for the prescribed mean curvature.
  The timelike convergence condition and the structural monotonicity condition were later on removed by Gerhardt \cite{Ger00}. Long-time existence of the mean curvature flow in Minkowski space was shown in \cite{Ecker97}. In \cite{Ecker93}, Ecker considered the mean curvature flow in asymptotically flat manifolds and proved long-time existence and convergence to a maximal hypersurface, provided that the spacetime satisfies a weak energy condition.

In this paper, we are considering the asymptotic behaviour of solutions of the mean curvature flow in asymptotically flat manifolds. In contrast to \cite{Ecker93}, we do not need any energy condition on the spacetime but we assume that the spacetime is of product type. Our main result is as follows:
\begin{theorem}
	Let $(M^n,\sigma)$, $n\neq2$ be an asymptotically flat Riemannian manifold and let $M\times \R$ be equipped with the Lorentzian product metric
	\[
		\sigma - (\d x^0)^2 \,.
	\]
	Furthermore, let $F_0\subset M\times\R$ be a hypersurface which is the graph of a function $u_0\in C^{0,1}(M)$ with Lipschitz constant smaller than $1-\varepsilon$ for some $\varepsilon>0$
	and such that $u_0(x)\to s$ for some $s\in\R$ as $x\to\infty$. Then there exists a solution $F_t$ to the mean curvature flow which starts at $F_0$ and exists for all times $t>0$. Further, $F_t$ is the graph of a function $u_t:M\to\R$ for each $t$ and $u_t$ converges to $u_{\infty}(x)=s$, uniformly in all derivatives as $t\to \infty$.
\end{theorem}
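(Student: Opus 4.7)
The plan is to reduce the flow to the quasilinear parabolic scalar equation for the graph height $u$ displayed above, establish uniform a priori estimates sufficient for global existence, and then combine spatial barriers with a rigidity argument for maximal graphs to obtain the convergence $u_t \to s$.

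Applying the maximum principle directly to the equation for $u$, and using that constants are stationary solutions, immediately yields $\inf_M u_0 \leq u_t \leq \sup_M u_0$ for as long as the flow exists. The crucial a priori estimate is then preservation of the strict spacelike condition $|\nabla u|_\sigma \leq 1-\varepsilon'$ for some $\varepsilon'>0$, which is equivalent to a uniform bound on the gradient function $v = 1/\sqrt{1-|\nabla u|_\sigma^2}$. Following Ecker, $v$ satisfies an evolution inequality of the form
\[
(\partial_t - \Delta_g)\, v \leq -|A|^2\, v + C\, v,
\]
where $g$ is the induced metric on the graph, $A$ its second fundamental form, and $C$ is controlled by the curvature of $(M,\sigma)$, which decays at infinity because $M$ is asymptotically flat. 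A maximum principle argument then yields the uniform bound on $v$. Once $v$ is controlled the equation is uniformly parabolic, and Ecker--Huisken-type evolution equations for $|A|^2$ and its covariant derivatives, combined with Krylov--Safonov and parabolic Schauder theory in weighted H\"older spaces adapted to asymptotic flatness, produce uniform $C^k$-bounds on $u_t$ and hence long-time existence. The merely Lipschitz initial data is handled by mollifying $u_0$ to smooth functions with the same uniform Lipschitz bound, solving the corresponding smooth Cauchy problems, and passing to the limit via these uniform estimates.

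The main obstacle is the convergence $u_t \to s$, which I would split into uniform-in-time spatial decay and temporal decay on compact sets. For the spatial part I would construct a positive function $\eta\colon M \to (0,\infty)$ with $\eta(x)\to 0$ as $x\to\infty$ such that, for every sufficiently small constant $C > 0$, the functions $s\pm C\eta$ are time-independent super- and subsolutions of the flow. For $n\geq 3$ such an $\eta$ is obtained from a suitably smoothed Green's function of the Laplacian on $(M,\sigma)$, which decays like $r^{2-n}$ at infinity; for $n=1$ the construction is elementary. The case $n=2$ must be excluded here because an asymptotically flat surface admits no positive decaying superharmonic function (a Liouville-type obstruction). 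Given such an $\eta$, for every $\delta>0$ one finds a constant $C_\delta$ with $|u_0-s|\leq \delta + C_\delta\,\eta$, and the comparison principle propagates this bound to all $t\geq 0$.

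For the temporal part I would exploit the uniform $C^k$-bounds: any sequence $u_{t_k}$ with $t_k\to\infty$ subconverges in $C^\infty_{\mathrm{loc}}$ to a smooth stationary (i.e., maximal) spacelike graph $u_\infty$ satisfying $|u_\infty-s|\leq C_\delta\,\eta + \delta$ for every $\delta > 0$, hence $u_\infty - s$ vanishes at infinity; rigidity of such maximal graphs, again via the barriers $s\pm C\eta$ together with a strong maximum principle, then forces $u_\infty\equiv s$. All subsequential limits thus coincide, giving $u_t\to s$ in $C^\infty_{\mathrm{loc}}$, and in combination with the uniform spatial decay this upgrades to uniform $C^0$-convergence on $M$. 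Interpolation with the uniform higher-order bounds finally yields convergence in all derivatives.
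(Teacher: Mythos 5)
Your outline matches the paper's architecture (scalar reduction, gradient estimate, decaying static barriers at infinity, subsequential limits plus a rigidity/maximum-principle argument), but two of your key steps fail as stated. First, the gradient estimate. For the product metric one has $\Ric_{h}(\nu,\nu)=v^{2}\Ric_{\sigma}(\prescript{\s}{}\nabla u,\prescript{\s}{}\nabla u)$, so the evolution equation $(\tfrac{d}{dt}-\Delta)v=-v(|A|^{2}+\Ric_h(\nu,\nu))$ yields a zeroth-order term bounded only by $Cv(v^{2}-1)$, i.e.\ \emph{cubic} in $v$, not the linear term $Cv$ you claim; curvature decay at infinity does not help on the compact core where $C$ need not be small. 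A direct maximum principle on $v$ therefore gives nothing (and even a linear term would only give exponential growth in $t$, not a time-uniform bound). The paper instead applies the maximum principle to $\varphi=v\exp(\mu\ee^{\lambda u})$: the cross term produced by the exponential factor contributes a good term $-\mu\lambda^{2}\varphi\ee^{\lambda u}(\mu\ee^{\lambda u}+1)|\nabla u|^{2}$ which, after choosing $\lambda,\mu$ in terms of the Ricci constant and using the a priori $C^{0}$ bound and $|\nabla u|^{2}=v^{2}-1$, absorbs the cubic term. Moreover, on noncompact $M$ the supremum of $v$ need not be attained; the paper makes the maximum principle legitimate by working with Dirichlet problems on an exhaustion and proving boundary gradient estimates there (using the very barriers of the next point), then passing to the limit.

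Second, the barrier. A (smoothed) Green's function is harmonic near infinity, and for a convex decaying radial profile $\eta\simeq r^{2-n}$ the nonlinear correction in the static operator, $\frac{\nabla^{2}\eta(\nabla\eta,\nabla\eta)}{1-|\nabla\eta|^{2}}\simeq\frac{(\eta')^{2}\eta''}{1-(\eta')^{2}}$, is strictly \emph{positive}; since the harmonic part vanishes identically, $s+C\eta$ is a strict subsolution near infinity for every $C>0$, not a supersolution, and shrinking $C$ cannot rescue a borderline exponent whose leading term is zero. One must slow the decay so that strict superharmonicity of order $r^{-\alpha-2}$ dominates both the nonlinear correction and the $O(\omega(r)/r)$ errors from the asymptotically flat metric; this is exactly what the paper does by replacing the maximal-surface exponent $2n-2$ by $2n-3$, producing a supersolution decaying like $r^{5/2-n}$. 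Two further points: your claim that the $n=1$ barrier construction is ``elementary'' is false for the same Liouville reason you invoke for $n=2$ (a bounded concave positive function on $\R$ is constant), which is why the paper treats $n=1$ by a separate $L^{2}$-energy argument; and in the convergence step you assert without proof that subsequential limits are stationary maximal graphs — the paper avoids this by passing to a limit of the translated \emph{flows} and using monotonicity of $\sup_M|u_t|$ together with the strict maximum principle.
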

In this paper, we prove the theorem for manifolds with one Euclidean end but the proof can easily be generalised to the case of multiple ends. Similarly, the result also holds for manifolds which are asymptotically locally Euclidean (ALE).

Our solution of the mean curvature flow is constructed as the limit of solutions of Dirichlet problems on large balls. In spatial dimensions $n\geq3$, we construct rotationally symmetric stationary solutions of the mean curvature flow in Minkowski space. In asymptotic coordinates, these solutions serve as barriers in order to guarantee that the limit solution stays asymptotic to $M\times\{s\}$ for positive times, i.\,e.\ the solution can not lift off at spatial infinity in the sense that the function $|u_t-s|$ becomes eventually bigger than some positive constant on any compact subset. The behaviour of the hypersurfaces at spatial infinity has been raised as an open point in \cite[Remark 3.2]{Ecker93}.

Furthermore, we use these barriers at infinity and a carefully chosen quantity that controls the solution in the interior to prove uniform $C^1$-bounds from which we obtain global existence. The convergence behaviour at $t\to\infty$ follows from a combination of maximum principle arguments.

We have to exclude the case $n=2$ because we can neither construct nice barriers at infinity nor use the simple structure of the evolution equation as in the case $n=1$. Therefore, we get an unsatisfactory gap in the main theorem. However, we conjecture that the assertion of our main result also holds in dimension $n=2$.
\begin{corollary}
	\label{MainCor}
	Let $(M\times N,\sigma+\eta)$ be the Riemannian product of an asymptotically flat manifold $(M^n,\sigma)$, $n\neq2$, and a Riemannian manifold $(N,\eta)$ of bounded geometry (i.e.\ with positive injectivity radius, bounded curvature and boundedness of any derivative of curvature) and let $M\times N\times \R$ be equipped with the Lorentzian product metric
	\[
		\sigma + \eta - (\d x^0)^2\,.
	\]
	Furthermore, let $F_0\subset M\times N\times\R$ be a hypersurface which is given by the graph of a bounded function $u_0\in C^{0,1}(M\times N)$ with Lipschitz constant at most $1-\varepsilon$ for some $\varepsilon>0$
	and such that $u_0(x,y)\to s$ uniformly in $y\in N$ for some $s\in\R$ as $x\to\infty$. Then there exists a solution $F_t$ to the mean curvature flow which starts at $F_0$ and exists for all times $t>0$. Further, $F_t$ is the graph of a function $u_t:M\times N\to\R$ for each $t$ and $u_t$ converges to $u_{\infty}(x)=s$, uniformly in all derivatives as $t\to \infty$.
\end{corollary}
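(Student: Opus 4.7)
The plan is to closely parallel the proof of Theorem 1.1, treating $N$ as a passive direction thanks to the product structure of the ambient metric. The key algebraic observation is that if $v:M\to\R$ is any stationary solution of the mean curvature flow on $M\times\R$, then $\tilde v(x,y):=v(x)$ is a stationary solution of the flow on $M\times N\times\R$. Indeed, the block-diagonal form of $\sigma+\eta$ gives $\sqrt{\det(\sigma+\eta)}=\sqrt{\det\sigma}\sqrt{\det\eta}$, so for a vector field $Y$ on $M\times N$ which is tangent to $M$ and depends only on $x$, the divergence $\div Y$ on $M\times N$ coincides with the divergence of $Y$ on $M$. Consequently, the rotationally symmetric barriers at $M$-infinity constructed in the proof of Theorem 1.1 pull back via the projection $M\times N\to M$ to barriers for the mean curvature flow on $M\times N\times\R$ and control the solution uniformly in $y\in N$.

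For existence, I would exhaust $M\times N$ by an increasing sequence of product domains $B_{R}^{M}(p)\times B_{R}^{N}(q_k)$, solve the Dirichlet problem for the mean curvature flow on each with boundary data interpolating between $u_0$ inside and the pulled-back barriers near the boundary, and pass to a limit. The interior $C^1$-estimate from the proof of Theorem 1.1 uses a carefully chosen auxiliary quantity whose evolution is controlled by algebraic combinations of the second fundamental form and ambient curvature; bounded geometry of $N$ ensures that all curvature terms entering this estimate remain uniformly bounded as $q_k$ varies, so the estimate yields a Lipschitz bound strictly less than $1$ independent of the exhaustion. Arzel\`a--Ascoli then produces a global solution $u_t$ on $M\times N\times[0,\infty)$ of the required spacelike type.

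Convergence as $t\to\infty$ follows the same maximum principle scheme as in Theorem 1.1. Near $M$-infinity, the $y$-independent barriers squeeze $u_t$ toward $s$ uniformly in $y$. In the interior, applying the maximum principle to $u_t-s$ on compact $M$-subsets, uniformly in $y$, drives this difference to zero; bounded geometry of $N$ upgrades the pointwise statement to uniform convergence in $y$, and standard interior parabolic regularity propagates convergence to all derivatives.

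The hardest step, I expect, will be ensuring that every estimate in the proof of Theorem 1.1 remains uniform in the extra non-compact direction $N$. In particular, the interior auxiliary quantity must still dominate the gradient when measured with respect to $\sigma+\eta$ rather than $\sigma$, and the product exhaustion must be arranged so that the $M$- and $N$-radii can grow independently without spoiling uniformity. Both issues rely crucially on bounded geometry of $N$, combined with the reduction of the effective PDE for $y$-independent functions to the equation on $M\times\R$.
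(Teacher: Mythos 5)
Your central observation---that a $y$-independent function on $M\times N$ satisfies the same graphical mean curvature flow equation as the corresponding function on $M$---is exactly the mechanism of the paper's argument, but the paper applies it to a more powerful object than your static barriers. One picks $v_0\colon M\to\R$ satisfying the hypotheses of the main theorem and dominating $|u_0(\cdot,\cdot)-s|$, lets $v_t$ be the solution of the main theorem on $M$, and extends $v_t$ constantly in the $N$-direction to obtain a \emph{time-dependent} solution on $M\times N\times\R$. Adding $\varepsilon$ and invoking a point-picking maximum principle (needed because suprema over the non-compact $M\times N$ need not be attained), $v_t\pm\varepsilon$ squeezes $u_t$ from above and below; since the main theorem already gives $v_t\to s$ uniformly in all derivatives, the convergence $u_t\to s$ is inherited at once, with no separate interior argument.

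This matters because the interior step of your convergence argument is where the gap lies. The paper's interior convergence proof (Theorem \ref{thm_longtime_convergence}) shows that $\sup_M|u_t|$ decreases, passes to a limit flow along $t_k\to\infty$, and contradicts the \emph{strict} maximum principle at a point where the supremum is \emph{attained}---attainment being guaranteed because decay at $M$-infinity confines the supremum to a fixed compact set. Over $M\times N$ with $N$ non-compact this attainment fails, and ``bounded geometry of $N$ upgrades the pointwise statement to uniform convergence in $y$'' is not an argument: pointwise decay on each slice $M\times\{y\}$ carries no uniformity in $y$ without a comparison function such as $v_t$. A second concrete problem is your exhaustion by product domains $B_R^M(p)\times B_R^N(q_k)$: the boundary gradient estimate (Proposition \ref{boundary_gradient_thm}) that feeds the interior estimate of Lemma \ref{lem_derivative_estimate} comes from the rotationally symmetric supersolutions over the asymptotically flat end of $M$ and gives no control whatsoever on the faces $B_R^M(p)\times\partial B_R^N(q_k)$, where $x$ may lie in the compact core of $M$; without a boundary gradient estimate there, the maximum principle for the quantity $\varphi$ cannot be closed. (The paper is itself terse about existence on $M\times N$, but your specific exhaustion would need new barriers in the $N$-direction before it could work.)
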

The corollary is a consequence of our main result: Pick an initial function $v_0:M\to \R$ 
satisfying the assumptions of the theorem and that in addition fulfills the inequality $|u_0(x,y)-s|\leq v_0(x)$ for all $(x,y)\in M\times N$. Then the mean curvature flow in $M\times \R$ starting at $\graph(v_0)$ (represented by the function $v_t$) can be extended to a mean curvature flow on $M\times N\times\R$ (by letting $v_t$ be constant in the $N$-direction).
By adding some $\varepsilon>0$ and applying a point-picking maximum principle we see that $v_t$
 serves as a barrier for $u_t$.
\begin{remark}If $N=\R^{n-1}$ (equipped with the flat metric) in Corollary \ref{MainCor}, one obtains the main theorem under weaker assumptions in the case of the Minkowski space:
	\begin{enumerate}[label=(\roman*)]
		\item If $M$ in the main theorem is $\R^n$ with the flat metric, the assertion also holds for $u_0\in C^{0,1}(\R^n)$ that does not converge to $s\in\R$ in all directions but is bounded and satisfies $u_0(x_1,\ldots,x_n)\to s$ uniformly in $x_2,\ldots,x_n$ as $|x_1|\to\infty$. 
		\item Suppose $F_0$ is a spacelike hypersurface that converges uniformly in one direction to an arbitrary spacelike hyperplane $\Sigma\subset \R^{n,1}$ in the following sense: There is an isometry $A\in \mathrm{Iso}(\R^{n,1})$ such that $A(\Sigma)=\R^{n}\times\left\{s\right\}$ and $A(F_0)=\graph(u_0)$ where $u_0$ satisfies the assumptions of (i). Then the mean curvature flow starting at $F_0$ exists for all times and converges to $\Sigma$ uniformly in all derivatives.
	\end{enumerate}
\end{remark}
From an analytical viewpoint, it would be very interesting to relax the condition on the Lipschitz constant of $u_0$ such that we can allow $|\nabla u_0|\to 1$ at infinity. Moreover, it would also be interesting to generalise the result from product spacetimes to stationary spacetimes. Both issues are planned to be attacked in the future.

This paper is organised as follows. In Section \ref{preliminaries}, we introduce notation, conventions and setup. In Section \ref{BarrierConstruction}, we construct rotationally symmetric barriers at infinity. In Section \ref{Interpolation}, a general procedure to interpolate between an arbitrary initial data on an asymptotically flat manifold and zero initial data on flat space is elaborated. In Section \ref{noliftoff}, we show that the mean curvature flow can not lift off at infinity due to our assumptions.
In Sections \ref{bgestimates} and \ref{destimates}, we 
prove gradient estimates for the Dirichlet problem at the boundary and in the interior, respectively. In Section \ref{mainresults}, we conclude with the proof of the main theorem.
\newline
\newline
\noindent \textbf{Acknowledgements.} This project was initiated at a winter school about geometric evolution equations in Konstanz in February 2018.
The authors thank the priority programme 2026 \textit{Geometry at Infinity}, funded by the \textit{Deutsche Forschungsgemeinschaft}, for financial support and for providing an excellent platform for joint research.
Klaus Kr\"oncke and Felix Lubbe have carefully connected our different contributions and prepared the present paper. The other authors are very grateful for these diligent efforts. 

%%%%%%%%%%%%%%%%%%%%%%%%%%%%%%%%%%%
% PRELIMINARIES
%%%%%%%%%%%%%%%%%%%%%%%%%%%%%%%%%%%
\newpage
\section{Preliminaries}\label{preliminaries}

\subsection{Notation and conventions}

Let $(M,\s)$ be an $n$-dimensional Riemannian manifold. We consider the Lorentzian product manifold
\[
	\bigl(M\times\Rb, h\bigr)\,, \qquad \text{where} \quad h\coloneqq \sigma- \bigl(\d x^0\bigr)^2 \,,
\]
and where $x^0$ is the coordinate on $\Rb$. Locally (i.\,e.\ with respect to local coordinates $\{x^k\}$), 
the expression for the metric is given by
\[
	h = \sigma_{ij} \d x^i \d x^j - (\d x^0)^2 \,,
\]
where we use the Einstein summation convention and
the indices $i,j,k,\dots$ will always run from $1$ to $n\coloneqq \dim M$. Associated to the metric $h$ we have
its Levi-Civita connection $\prescript{h}{}\nabla$, which splits according to the product structure as
\[
	\prescript{h}{}\n = \prescript{\s}{}\n \oplus \frac{\partial}{\partial x_0} \,.
\]
Here, $\prescript{\s}{}\n$ is the Levi-Civita connection with respect to $\s$, with Christoffel symbols given
by $\ChrM_{ij}^k$.

A smooth function $u\colon M\to\Rb$ naturally defines an embedding
into the product manifold $M\times\Rb$ via
\[
	F\colon M\to M\times\Rb\,, \qquad F(x)\coloneqq \bigl( x, u(x) \bigr) \,.
\]
Then the graph of the map $u$ is the image of $F$,
\[
	\graph u \coloneqq F(M) \coloneqq \bigl\{ \bigl( x, u(x) \bigr) : x\in M \bigr\} \subset M\times\Rb \,.
\]
The induced metric $g$ on $M$ is defined as
\[
	g \coloneqq F^*h 
\]
and is locally given by
\[
	g_{ij} = (F^*h)_{ij} = \sigma_{ij} - u_i u_j \,,
\]
where $u_j \coloneqq \frac{\partial u}{\partial x_j}$ denotes the derivatives of the function $u$
with respect to the local coordinates $\{x^k\}$. The
Levi-Civita connection of $g$ will be denoted by $\n$.

For the Ricci curvatures associated to the metrics $h$ and $\s$, we will write $\Ric_h$ and $\Ric_{\s}$,
respectively. \\

The different norms induced by the metrics will be indicated by an index. For example,
we write $|\cdot|_g$ for the norm induced by the metric $g$. The gradient associated 
to a metric will be denoted by the same symbol as the corresponding connection
(e.\,g.\ $(\prescript{\sigma}{}\n u)^i = \sigma^{ij} u_j$).
Note that when writing the norm of a derivative, we will use
the same metric for the connection and for the norm (unless otherwise stated), e.\,g.\ $|\n u|_{\s} \coloneqq \abs{\prescript{\s}{}\n u}_{\s}$. \\

Throughout, we will often use the notation ``$x \lesssim y$'', which means ``there is a universal constant $C=C(n)>0$ such that $x\le Cy$'' and ``$x\simeq y$'' means ``$x\lesssim y$ and $y\lesssim x$''.\\

Let us endow $\R^n$ with its Euclidean metric $\delta_{ij}$.
For the following definition, 
norms $|\cdot|$, raising and lowering of indices, and differentiation $\nabla$ are taken with respect to the Euclidean metric, unless otherwise specified.
Moreover, for a function $f$ we denote by $f_{i}$ or, respectively, $f_{ij}$ derivatives with respect to the Euclidean metric, i.\,e.\ partial derivatives in Cartesian coordinates.

\begin{definition}
	A Riemannian manifold $(M,\sigma)$ is called \emph{asymptotically flat} if there exists a compact set $K\subset M$, a Euclidean ball 
	$B_{R_0}(0)\subset\r^n$ such that $M\setminus K$ is diffeomorphic to $\r^n\setminus B_{R_0}(0)$ and there is a bounded function $\omega \colon (R_0,\infty)\to \r_+$ 
	with $\omega(\xi)\to 0$ as $\xi\to \infty$ such that for all $r\geq R_0$,
	\begin{align}
		\left|\sigma_{ij}-\delta_{ij}\right| & \lesssim \omega(r),  \label{eq asym Eucl 1}\\
		\left|\nabla \left(\sigma_{ij}-\delta_{ij}\right)\right|  =  \left|\nabla \sigma_{ij}\right| & \lesssim \frac{\omega(r)}{r},\label{eq asym Eucl 2}
	\end{align}
	where $r$ is the Euclidean distance to the origin and the diffeomorphism is suppressed in the formul\ae.
\end{definition}

\begin{remark}
	\begin{enumerate}[leftmargin=1cm,label=(\roman*)]
		\item Note that asymptotic flatness implies
			\begin{align}
				\left|\sigma^{ij}-\delta^{ij}\right| & \lesssim \omega(r), \label{eq asym Eucl 3}\\
				\left| \ChrM^k_{ij} \right| = \left| \tfrac12 \sigma^{kl}(\sigma_{li,j}+\sigma_{lj,i}-\sigma_{ij,l})\right| & \lesssim \frac{\omega(r)}{r}. \label{eq asym Christoffel}
			\end{align}
		\item We use a weaker definition of asymptotically flatness than most other papers where one often assumes that $\omega(r)=r^{-\tau}$ for some $\tau>0$ (e.g.\ in \cite{Ecker93}). In this case, one calls $(M,\sigma)$ asymptotically flat of order $\tau$.
	\end{enumerate}
\end{remark}

The diffeomorphism $\varphi\colon M\setminus K\to \r^n\setminus B_{R_0}(0)$ is also called an asymptotic coordinate system and $\varphi=(x^1,\ldots,x^n)$ are called asymptotic coordinates.
 The Riemannian distance function of $\sigma$ will be denoted by $d$ and metric balls of radius $R$ around $x\in M$ will be denoted by $B_R(x)$. If $R\geq R_0$, we use the notation
\begin{align*}K_R\coloneqq M\setminus (\varphi^{-1}(\r^n\setminus B_{R}(0))).
\end{align*}
We call $M\setminus K_R=\varphi^{-1}\bigl(\r^n\setminus B_{R}(0)\bigr)$ an asymptotic coordinate neighbourhood.

Finally, we call $u\in C^{0,1}(M)$ uniformly spacelike, 
if there exists an $\varepsilon>0$, such that the Lipschitz constant (with respect to $\sigma$) is at most $ 1-\varepsilon$. For $u\in C^1(M)$, this is equivalent to $|\nabla u|_{\sigma}\leq1-\varepsilon$. The notion of uniform spacelikeness will become clearer in the next subsection.

\subsection{Mean curvature flow of spacelike hypersurfaces in static spacetimes}
\label{sec:MCF11}

Let $u\colon M\times[0,T)\to\Rb$ be a family of smooth functions,
where we set $u_t(x)\coloneqq u(x,t)$.
We say that the family $u$ evolves under the mean curvature
flow, if the family of graphs
\[
	\graph u_t \coloneqq \bigl\{ (x,u(x,t)) : x \in M \bigr\} \subset M\times\Rb
\]
satisfies the mean curvature flow equation, that is, there exists another family of maps $F\colon M\times [0,T)\to M\times \R$ such that
\begin{equation}
	\label{eq:MCF}
	\begin{cases}
		\partial_t F_t(x) = \Hv(x,t) \quad \forall x\in M \,, \\
		F_0(x) = (x,u_0(x)) \,,
	\end{cases}
\end{equation}
and $\graph u_t=F_t(M)$. Here, $\Hv(x,t)$ denotes the mean curvature vector of $F_t(M)$ at $F(x,t)$.
 To get a parabolic equation, we have to assume that the hypersurfaces $F_t(M)$ are spacelike. This is the case if and only if $|\nabla u|_{\sigma}<1$ everywhere. Under these assumptions, the mean curvature flow system may equivalently be written entirely in terms
of $u$ as
\begin{equation}
	\label{eq:gMCF}
	\partial_t u = \sqrt{1-| \nabla u |_{\sigma} ^2} \, \div_{\sigma} \left( \frac{\prescript{\s}{}\nabla u}{\sqrt{1-| \nabla u|_{\sigma}^2}} \right) \,.
\end{equation}
Here, 
$\div_{\s}$ denotes the divergence 
with respect to the metric $\s$.
Using that the inverse metric is given by
\[
	g^{ij}(\sigma,\nabla u) =\sigma^{ij} + \frac{\sigma^{ik} \sigma^{jl} u_k u_l}{1-\sigma^{kl}u_k u_l} \,,
\]
we can also write
\begin{equation} \label{eq MCF}
	\partial_t u=g^{ij}(\sigma,\nabla u) \, \prescript{\s}{} \nabla^2_{ij} u \,.
\end{equation}

\begin{lemma}
	\label{lem:evu}
	The evolution equation for $u$ is given by
	\[
		\left( \frac{d}{dt} - \Delta \right) u = 0 \,,
	\]
	where $\frac{d}{dt}$ denotes the total time derivative and
	$\Delta$ is the Laplacian with respect to the induced metric $\gind$.
\end{lemma}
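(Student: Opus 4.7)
The identity should be read as: the ambient coordinate $x^{0}$, viewed as a function on the moving graph, satisfies a geometric heat equation along the flow. I would prove it by applying the tension-field formula $\Hv = \tau(F)$ for the isometric immersion $F\colon(M,g)\hookrightarrow(M\times\R,h)$ to the single component $\alpha=0$.

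In local coordinates, the $0$-th component of the tension field reads
\[
H^{0} = g^{ij}\bigl(\partial_i\partial_j u - \Gamma^{k}_{ij}(g)\,\partial_k u + \prescript{h}{}\Gamma^{0}_{\beta\gamma}(F)\,\partial_i F^{\beta}\,\partial_j F^{\gamma}\bigr).
\]
Because $h=\sigma-(\d x^{0})^{2}$ has $h_{00}=-1$ constant and $h_{0i}=0$, a direct check shows that every Christoffel symbol $\prescript{h}{}\Gamma^{0}_{\beta\gamma}$ vanishes, so the right-hand side collapses to $\Delta u$ and $H^{0}=\Delta u$. Interpreting the total time derivative $\frac{d}{dt}$ as differentiation of $u$ along an MCF flow line $p(t)$ with $\dot p(t)=\Hv(p(t),t)$, the chain rule gives $\frac{d}{dt}u = \d x^{0}(\Hv)=H^{0}$, and combining with the previous identity yields $\frac{d}{dt}u=\Delta u$.

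The one point requiring care is the distinction between this total (material) derivative and the partial derivative $\partial_t$ at fixed $x$ appearing in the graphical MCF equation \eqref{eq:gMCF}. A short divergence-form computation using $\sqrt{|g|}=\sqrt{1-|\nabla u|_{\sigma}^{2}}\sqrt{|\sigma|}$ (from the matrix-determinant lemma) together with $g^{ij}\partial_j u = \sigma^{ij}u_j/(1-|\nabla u|_{\sigma}^{2})$ (from Sherman--Morrison) shows that in fact $\partial_t u = (1-|\nabla u|_{\sigma}^{2})\,\Delta u$; the factor $1-|\nabla u|_{\sigma}^{2}$ is precisely the Jacobian between the graphical and the normal parametrizations of the flow. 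Once this notational distinction is cleared up, the lemma is an immediate consequence of the tension-field formula.
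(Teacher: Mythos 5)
Your argument is correct, and it is genuinely more self-contained than the paper's, which disposes of the lemma by citing Gerhardt \cite{Ger00}*{Lemma 3.5} and noting that the inhomogeneous term there (built from the second fundamental form of the slices $\{x^0=\text{const}\}$, weighted by the conformal factor of the metric) vanishes in the product case. What you do is essentially re-derive that specialization from scratch: since $h_{00}=-1$ and $h_{0i}=0$ are constant and $h$ is independent of $x^0$, indeed $\prescript{h}{}\Gamma^0_{\beta\gamma}=0$ for all $\beta,\gamma$, so the $0$-component of $\Hv=\tau(F)$ is $g^{ij}\bigl(u_{ij}-\Gamma^k_{ij}(g)\,u_k\bigr)=\Delta u$, and the material derivative of $u=x^0\circ F$ along a flow line with velocity $\Hv$ is $\d x^0(\Hv)=H^0$. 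Your reconciliation of the two time derivatives is also correct and worth having on record: from $\sqrt{|g|}=v^{-1}\sqrt{|\sigma|}$ and $g^{ij}u_j=v^2\sigma^{ij}u_j$ one gets $\Delta u=v\,\div_\sigma\bigl(v\,\prescript{\s}{}\nabla u\bigr)$, so \eqref{eq:gMCF} reads $\partial_t u=v^{-2}\Delta u=(1-|\nabla u|_\sigma^2)\Delta u$; the discrepancy with $\frac{d}{dt}u=\Delta u$ is exactly the tangential reparametrization needed to keep the flow graphical, the velocity $(0,\partial_t u)$ having normal component $v\,\partial_t u\,\nu$ while $\Hv=v^{-1}\Delta u\,\nu$. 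What the citation buys the paper is brevity and a formula valid for general conformally static metrics; what your computation buys is transparency, and in particular it makes visible why the right-hand side of Gerhardt's formula vanishes here (the slices are totally geodesic), which the paper only asserts.
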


\begin{proof}
	In \cite[Lemma 3.5]{Ger00} (using the notation of this paper),
we have the formula
\[
\left( \frac{d}{dt} - \Delta \right) u = e^{-\psi}v^{-1}f-e^{-\psi}g^{ij}\bar{h}_{ij}+\bar{\Gamma}_{00}^{0}|D u|^2+2\bar{\Gamma}_{0i}^0u^i,
\]	
for the mean curvature flow
\[
\partial_t F(x,t) = (\Hv-f\nu)(x,t)
\]
with a prescribed function $f$ in a general Lorentzian manifold. In our setting, $f=0$. The appearing Christoffel symbols vanish, because $M\times \R$ is equipped with a product metric. For the same reason, the second fundamental form $\bar{h}$ of the hypersurfaces $M\times\left\{s\right\}$ vanishes. Thus, the whole right hand side vanishes and we are left with the result of the lemma.
\end{proof}

Let us set
\begin{align}\label{definition_v}
	v \coloneqq \frac{1}{\sqrt{1-|\nabla u|^2_{\s}}} = - \left\< \nu, \frac{\partial}{\partial {x_0}} \right\> \,, \quad \text{where} \quad \nu \coloneqq \frac{\bigl(\prescript{\s}{}\nabla u, 1\bigr)}{\sqrt{1-|\nabla u|^2_{\s}}}
\end{align}
is the unit normal vector of the graph. 

\begin{lemma}\label{uniform_metrics}
	Assume that $\graph u_t$ is strictly spacelike.
	Then the relations
	\begin{align*}
		\frac{1}{v^2} \sigma &\le g \le \sigma \,, \\
		1 - \frac{1}{v^2} &= |\nabla u|^2_{\s} \le |\nabla u|^2 = v^2-1
	\end{align*}
	hold.
\end{lemma}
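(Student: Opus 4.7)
The plan is to prove the two displayed inequalities in turn, working pointwise at a given $x\in M$ and using only the algebraic identity $g_{ij}=\sigma_{ij}-u_iu_j$ together with the explicit formula for $g^{ij}$ provided just before the statement.

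For the metric comparison $\frac{1}{v^2}\sigma \le g \le \sigma$, I would fix a tangent vector $X\in T_xM$ and expand
\[
g(X,X)=\sigma_{ij}X^iX^j - (u_iX^i)^2 = |X|_\sigma^2 - \bigl(\sigma(\prescript{\sigma}{}\nabla u,X)\bigr)^2.
\]
The upper bound $g(X,X)\le |X|_\sigma^2$ is then immediate since the subtracted term is nonnegative. For the lower bound I would apply the Cauchy–Schwarz inequality in the $\sigma$-inner product to get $(\sigma(\prescript{\sigma}{}\nabla u,X))^2 \le |\nabla u|_\sigma^2\,|X|_\sigma^2$, which yields
\[
g(X,X)\ge \bigl(1-|\nabla u|_\sigma^2\bigr)|X|_\sigma^2 = \tfrac{1}{v^2}|X|_\sigma^2,
\]
using the defining identity $v^{-2}=1-|\nabla u|_\sigma^2$.

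The stated identity $1-\frac{1}{v^2}=|\nabla u|_\sigma^2$ is then just a rewriting of the definition of $v$. For $|\nabla u|^2 = v^2-1$, I would compute $|\nabla u|^2 = g^{ij}u_iu_j$ using the inverse-metric formula given above the lemma,
\[
g^{ij}u_iu_j = \sigma^{ij}u_iu_j + \frac{(\sigma^{ik}u_iu_k)(\sigma^{jl}u_ju_l)}{1-\sigma^{kl}u_ku_l} = |\nabla u|_\sigma^2 + \frac{|\nabla u|_\sigma^4}{1-|\nabla u|_\sigma^2},
\]
and then combine the two fractions into $\frac{|\nabla u|_\sigma^2}{1-|\nabla u|_\sigma^2}$, which equals $(1-v^{-2})/v^{-2}=v^2-1$. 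Finally, the middle inequality $|\nabla u|_\sigma^2\le |\nabla u|^2$ is equivalent to $1-v^{-2}\le v^2-1$, i.e.\ $v^2+v^{-2}\ge 2$, which holds by AM–GM (or equivalently is seen directly from $1-|\nabla u|_\sigma^2\le 1$ applied to the denominator in the computation above).

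There is no real obstacle here: everything reduces to pointwise linear algebra and Cauchy–Schwarz, and the strict spacelikeness assumption $|\nabla u|_\sigma<1$ is used only to ensure that $v$ is finite and that the denominators appearing in the inverse-metric formula are positive, so that all manipulations are justified.
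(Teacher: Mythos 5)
Your proposal is correct and follows essentially the same route as the paper: Cauchy--Schwarz in the $\sigma$-inner product for the metric comparison, and the explicit formula for $g^{ij}$ to compute $|\nabla u|^2 = v^2-1$. The only cosmetic difference is that you obtain the middle inequality $|\nabla u|^2_\sigma \le |\nabla u|^2$ from the identity $v^2+v^{-2}\ge 2$ (equivalently from the denominator $1-|\nabla u|_\sigma^2\le 1$), whereas the paper deduces it from the dual comparison $\sigma^{-1}\le g^{-1}$; both are immediate.
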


\begin{proof}
	The first relation follows by noting that for all vectors $w\in TM$ we have
	\[
		\frac{1}{v^2} = 1 - |\nabla u|^2_{\s} \,, \qquad |\nabla u|^2_{\s} \s(w,w) \ge (\partial_w u)^2
	\]
	and $g(w,w)=\sigma(w,w)-(\partial_wu)^2\le \sigma(w,w)$.
	For the second relation, we note that $\s^{-1}\le g^{-1}\le v^2 \s^{-1}$ and calculate
	\[
		|\n u|^2_{\s} = \s^{ij} u_i u_j \le g^{ij} u_i u_j = |\n u|^2 \,.
	\]
	For the remaining equality, we calculate
	\begin{align*}
		|\n u|^2 &= g^{ij} u_i u_j = \left( \sigma^{ij} + \frac{\sigma^{ik} \sigma^{jl} u_k u_l}{1-\sigma^{kl} u_k u_l} \right) u_i u_j \\
			&= |\n u|^2_{\s} + \frac{|\n u|^4_{\s}}{1-|\n u|_{\s}^2} = \frac{|\n u|_{\s}^2}{1-|\n u|_{\s}^2} = v^2-1 \,. \qedhere
	\end{align*}
\end{proof}

%%%%%%%%%%%%%%%%%%%%%%%%%%%%%%%%%%%
% CONSTRUCTION OF A BARRIER
%%%%%%%%%%%%%%%%%%%%%%%%%%%%%%%%%%%
\section{Construction of a barrier}\label{BarrierConstruction}

For this section we assume $n\ge 3$.
In the cases $n=1,2$, the graphical maximal surface described below is unbounded.
Therefore, these cases are slightly more complicated and will be discussed in separate remarks in later chapters.

We wish to find a rotationally symmetric positive function $b$ defined on $\R^n \setminus B_{r_0}(0)$ (where $\R^n$ is equipped with an asymptotically flat metric $\sigma$) for some $r_0>0$, such that $b(x)\gg 1$ for $|x|=r_0$, $b(x)\to 0$ for $|x|\to \infty$, and such that $b$ is a static supersolution of \eqref{eq MCF}, i.\,e.\
\begin{equation} \label{eq supersolution}
	g^{ij}(\sigma,\nabla b) \, \prescript{\s}{} \nabla^2_{ij} b \equiv \left(\sigma^{ij} + \frac{\sigma^{ik} \sigma^{jl} b_k b_l}{1-\sigma^{kl}b_k b_l} \right) \left(b_{ij}- \ChrM^k_{ij} b_k\right) \le 0.
\end{equation}
Let us first consider $\sigma_{ij}(x)=\delta_{ij}$ and the case of equality in \eqref{eq supersolution}.
That is, we consider the equation for maximal spacelike hypersurfaces in Minkowski space.
The function corresponding to $b$ in this case we denote by $\beta$.
We choose $\beta$ to be rotationally symmetric and by abuse of notation we write $\beta(x)\equiv \beta(r)$.
Also setting $\beta^i = \beta_k \delta^{ki}$, \eqref{eq supersolution} becomes
\begin{align}
	0& = \left(\delta^{ij}+ \frac{ \beta^i \beta^j}{1-\beta^k\beta_k} \right) \beta_{ij} \nonumber \\
		& = \left( \delta^{ij} + \frac{(\beta')^2}{1-(\beta')^2}\frac{x^i x^j}{r^2}\right) \left( \beta'' \frac{x_i x_j}{r^2} + \beta' \left(\frac{\delta_{ij}}{r}-\frac{x_i x_j}{r^3}\right)\right) \nonumber\\
		& = \frac{\beta''}{1-(\beta')^2} + \beta' \frac{n-1}{r}. \label{eq Mink max surf}
\end{align}
Rewriting this equation in the form
\[
	\frac{\beta''}{(1-\beta')(1+\beta')\beta'} = -\frac{n-1}{r}
\]
makes it easy to integrate it under the condition $-1<\beta'<0$.
We have for any $c>0$ a solution of \eqref{eq Mink max surf} of the form
\begin{equation}\label{eq beta'}
	\beta' = -\left(1+c\,r^{2n-2}\right)^{-1/2}.
\end{equation}
We remark that $\beta'$ is integrable at infinity only for $n\ge 3$ and a solution with $\beta(r)\to 0$ for $r\to \infty$ is available only in this case.

\begin{lemma}\label{barrierlemma}
	There exists $r_1>0$, depending only on $\sigma$, such that for any $r_0 \ge r_1$ there exists a rotationally symmetric function $b\colon \R^n\setminus B_{r_0}(0) \to \R$ with the following properties:
	\begin{enumerate}[leftmargin=1cm,label=(\roman*)]
		\item $b(x) \simeq r_0^{n-\frac32}\, |x|^{-(n-\frac52)}$ and
		\item $g^{ij}(\sigma,\nabla b) \prescript{\s}{}\nabla^2_{ij}b \le 0$, i.\,e.\ $b$ is a static supersolution. 
	\end{enumerate}
\end{lemma}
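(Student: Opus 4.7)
The plan is to exhibit an explicit rotationally symmetric ansatz with exactly the target asymptotic behavior, compute the relevant operator in the Euclidean case, and verify that the asymptotic-flatness corrections are subordinate to a strict negative margin. Concretely, I would take
\[
b(r) \coloneqq c_n\, r_0^{n-\frac32}\, r^{-(n-\frac52)}, \qquad r \geq r_0,
\]
with a small constant $c_n = c_n(n) \in (0,1)$. Property (i) is immediate, and $|b'(r)| = c_n(n-\tfrac52)(r_0/r)^{n-\frac32} \le c_n(n-\tfrac52) < 1$ uniformly on $r \ge r_0$, so the operator in \eqref{eq supersolution} is well-defined and elliptic.

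To verify (ii) I would first carry out the Euclidean computation. As in the derivation of \eqref{eq Mink max surf}, on rotationally symmetric functions the Euclidean version of the operator reduces to $\frac{b''}{1-(b')^2}+(n-1)\frac{b'}{r}$. With $\gamma \coloneqq n-\tfrac52$ and $A \coloneqq c_n r_0^{n-\frac32}$, substitution gives
\[
\frac{b''}{1-(b')^2}+(n-1)\frac{b'}{r} = A\gamma\, r^{-(n-\frac12)}\left(\frac{\gamma+1}{1-(b')^2}-(n-1)\right).
\]
Since $\gamma+1 = n-\tfrac32 < n-1$, and since taking $c_n$ small forces $(b')^2$ to be uniformly small on $r \ge r_0$, the bracket is bounded above by a negative constant $-c(n)$. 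The key structural point is the choice of decay exponent $\gamma = n-\tfrac52$, strictly smaller than the exponent $n-2$ corresponding to the Minkowski maximal surface $\beta$: this ensures the Euclidean expression is negative with a margin of order $A\, r^{-(n-\frac12)}$, rather than vanishing as it does for $\beta$.

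Passing from the Euclidean to the asymptotically flat operator, \eqref{eq asym Eucl 1}--\eqref{eq asym Christoffel} bound each correction by $\omega(r)$ times a quantity of size $A\, r^{-(n-\frac12)}$: the difference $|\sigma^{ij}-\delta^{ij}|$ couples with $|b''|$, the Christoffel symbols couple with $|b'|$, and the nonlinear piece of $g^{ij}(\sigma,\nabla b)-g^{ij}(\delta,\nabla b)$ is $O(\omega(r))$ because $|\nabla b|$ stays uniformly bounded away from $1$. Combining, one obtains an estimate of the form
\[
g^{ij}(\sigma,\nabla b)\,\prescript{\sigma}{}\nabla^2_{ij} b \;\le\; -A\, r^{-(n-\frac12)}\bigl(c(n)-C(n)\,\omega(r)\bigr),
\]
which is non-positive whenever $\omega(r) \le c(n)/C(n)$. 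Since $\omega(r) \to 0$ at infinity, one picks $r_1 = r_1(\sigma)$ so that this bound holds for all $r \ge r_1$; then any $r_0 \ge r_1$ works.

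The main obstacle will be quantitative: making sure the Euclidean step yields a \emph{strictly} negative margin with a constant $c(n)$ independent of $r_0$. This is precisely why one needs the exponent $\gamma$ strictly below $n-2$ and the amplitude $c_n$ small enough that $1-(b')^2$ is bounded below uniformly on $r \ge r_0$; with those choices locked in, the remaining steps are routine applications of the asymptotic-flatness bounds.
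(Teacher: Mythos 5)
Your proof is correct and follows the same two-step architecture as the paper's: produce a rotationally symmetric profile decaying like $r_0^{n-\frac32}r^{-(n-\frac52)}$ that is a \emph{strict} Euclidean supersolution with margin of order $r^{-1}(r/r_0)^{-(n-\frac32)}$, and then absorb the asymptotic-flatness corrections, each of size $\omega(r)$ times that margin, by taking $r_1$ large. The only genuine difference is the ansatz. The paper tweaks the maximal-surface ODE, prescribing $b' = -(1+(r/r_0)^{2n-3})^{-1/2}$ and integrating from infinity; that barrier has $|b'|=1/\sqrt2$ at $r=r_0$ independently of $r_0$, and the Euclidean computation collapses to the clean identity $\frac{b''}{1-(b')^2}+(n-1)\frac{b'}{r}=\frac12\frac{b'}{r}$, with no smallness condition on the gradient beyond $1-\delta^{kl}b_kb_l\ge\frac12$. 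You instead take the explicit power law $c_n r_0^{n-\frac32}r^{-(n-\frac52)}$ with $c_n$ small, paying for the explicitness with the (correctly identified) bookkeeping that $(b')^2\le c_n^2(n-\tfrac52)^2$ must be small enough that $\frac{n-3/2}{1-(b')^2}<n-1$ holds with a definite margin; your observation that the decay exponent $n-\tfrac52$ must sit strictly below the maximal-surface exponent $n-2$ is exactly the mechanism the paper exploits. Both versions give $b(r_0)\simeq r_0$ and $|\nabla b|\simeq(r/r_0)^{-(n-\frac32)}$, so the later uses of the lemma (Corollary \ref{cor barrier at infinity} and the boundary gradient estimate in Proposition \ref{boundary_gradient_thm}) go through unchanged with your barrier.
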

\begin{proof}In order to construct a supersolution, we are going to tweak the exponent of $r$ in \eqref{eq beta'} to make $\beta$ into a proper supersolution $b$ strong enough to survive a change from the Euclidean metric to $\sigma$.
For $r_0>0$ (to be chosen later) we consider the rotationally symmetric function $b(r)$ given on $\R^n\setminus B_{r_0}(0)$ by the conditions $b(r)\to 0$ for $r\to \infty$ and
\begin{equation}
	b' = -\left(1+\left(\frac{r}{r_0}\right)^{2n-3}\right)^{-1/2}.
\end{equation}
The second derivative is given by
\begin{equation}
	b'' = \left(n-\tfrac32\right)\frac{\frac{1}{r_0}\left(\frac{r}{r_0}\right)^{2n-4}}{\left( 1+ \left(\frac{r}{r_0}\right)^{2n-3}\right)^{3/2}}.
\end{equation}
We will also need
\begin{equation}
	1-(b')^2 = 1- \frac{1}{1+\left(\frac{r}{r_0}\right)^{2n-3}} = \frac{\left(\frac{r}{r_0}\right)^{2n-3}}{1+\left(\frac{r}{r_0}\right)^{2n-3}}.
\end{equation}
Now, instead of fulfilling \eqref{eq Mink max surf}, $b$ is a strict supersolution:
\begin{align}
	\left(\delta^{ij}+ \frac{ b^i b^j}{1-b^kb_k} \right) b_{ij}& =  \frac{b''}{1-(b')^2} + b' \frac{n-1}{r} \nonumber \\
		&= (n-\tfrac32)  \frac{r^{-1}}{\left(1+\left(\frac{r}{r_0}\right)^{2n-3}\right)^{1/2}} +(n-1)\frac{b'}{r} \nonumber \\
		&= \frac12 \frac{b'}{r} \simeq  -  r^{-1}  \left(\frac{r}{r_0}\right)^{-n+\frac32} \,. \label{eq strict supersolution}
\end{align}
We shall see that the left-hand sides of \eqref{eq supersolution} and \eqref{eq strict supersolution} differ by terms of lower order than the right-hand side of \eqref{eq strict supersolution}.
In this way we will obtain \eqref{eq supersolution}.
Before we continue let us remark the following:
\begin{align}
	|\nabla b| & \simeq \left(\frac{r}{r_0}\right)^{-n+\frac32}, \label{eq asymptotics Db}\\
	|\nabla^2 b| & \simeq  r^{-1}\left(\frac{r}{r_0}\right)^{-n+\frac32}. \label{eq asymptotics D^2b}
\end{align}
Furthermore we note $1-\delta^{kl}b_k b_l \ge \frac12$ and by assuming that $r_0$ is sufficiently large we can infer by \eqref{eq asym Eucl 1} that $1-\sigma^{kl}b_kb_l \ge \frac13$.
It allows us to estimate in the following way using \eqref{eq asym Eucl 3} and $|\nabla b|\le 1$:
\begin{equation} \label{eq 1. error term}
	\left| \frac{1}{1-\sigma^{kl}b_kb_l} - \frac{1}{1-\delta^{kl}b_kb_l}\right|
		= \left| \frac{(\sigma^{kl}-\delta^{kl})b_kb_l}{(1-\sigma^{kl}b_kb_l)(1-\delta^{kl}b_kb_l)} \right|
		\lesssim \omega(r).
\end{equation}
It is now straightforward to estimate the error term from $g^{ij}$:
\begin{equation} \label{eq metric comparison}
	\begin{split}
		&\quad \left| g^{ij}(\sigma,\nabla b)-g^{ij}(\delta,\nabla b) \right| \\
		& = \left| \sigma^{ij} + \frac{\sigma^{ik}\sigma^{jl} b_kb_l}{1-\sigma^{kl}b_kb_l}
			- \delta^{ij} - \frac{\delta^{ik}\delta^{jl} b_kb_l}{1-\delta^{kl}b_kb_l}\right|\\
		& \le \left|\sigma^{ij}-\delta^{ij}\right| 
			+ \left| \frac{\sigma^{ik}\sigma^{jl}-\delta^{ik}\delta^{jl}}{1-\sigma^{kl}b_kb_l} \right|
			+ \left| \delta^{ik}\delta^{jl} \right| \left| \frac{1}{1-\sigma^{kl}b_kb_l}-\frac{1}{1-\delta^{kl}b_kb_l}\right|\\
		& \lesssim \omega(r).
	\end{split}
\end{equation}
We can finally compare the operators for the different metrics:
\begin{equation} \label{eq operator comparison}
	\begin{split}
		& \quad \left| g^{ij}(\sigma,\nabla b) \prescript{\s}{}\nabla^2_{ij}b - g^{ij}(\delta,\nabla b) b_{ij} \right|\\
		& \le \left| g^{ij}(\sigma,\nabla b) - g^{ij}(\delta,\nabla b) \right| |b_{ij}|
			+ \left|g^{ij}(\sigma,\nabla b)\right| \left| \ChrM^k_{ij}b_k\right|\\
		& \lesssim \frac{\omega(r)}{r} \left(\frac{r}{r_0}\right)^{-n+\frac32}
	\end{split}
\end{equation}
by \eqref{eq metric comparison}, \eqref{eq asymptotics D^2b},  \eqref{eq asymptotics Db}, \eqref{eq asym Christoffel}, and the fact that $\left|g^{ij}(\sigma,\nabla b)\right|$ is bounded (by \eqref{eq asym Eucl 3} and $r_0\gg 1$).
Since by \eqref{eq strict supersolution}
\begin{equation}
	g^{ij}(\delta,\nabla b) b_{ij}= \left(\delta^{ij}+ \frac{ b^i b^j}{1-b^kb_k} \right) b_{ij}
		\simeq -   r^{-1}  \left(\frac{r}{r_0}\right)^{-n+\frac32} \,,
\end{equation}
we use \eqref{eq operator comparison} for $1\ll r_0 \le r$ to conclude the desired result \eqref{eq supersolution}, i.\,e.\ $b$ is a supersolution.
As we choose $b$ such that $b(r)\to 0$ for $r\to \infty$ we find
\begin{equation}
	b(r)
	= -\int\limits_{r}^{\infty} b'(s) \, \d s
	\simeq  \int\limits_{r}^{\infty} \left(\frac{s}{r_0}\right)^{-n+\frac32} \d s
	\simeq r_0^{n-\frac32}\, r^{-n+\frac52}
\end{equation}
which finishes the proof.
\end{proof}
For making use of the supersolution $b$ as a barrier, we need to make sure that a solution can not touch $b$ at $r=r_0$. 
But this is ensured by the lemma,
since $b$ can be made arbitrarily large at $r=r_0$ as we enlarge $r_0$.
\begin{corollary} \label{cor barrier at infinity}
	For any $h>0$ and for any $r_1>0$ there exists $r_0\ge r_1$, depending only on $\sigma$, and a rotationally symmetric function $b\colon \R^n\setminus B_{r_0}(0) \to \R$ with the following properties:
	\begin{enumerate}[leftmargin=1cm,label=(\roman*)]
		\item $b>0$,
		\item $b(x)\to 0$ for $|x|\to \infty$,
		\item $b(x)\ge h$ for $|x|=r_0$, and 
		\item $g^{ij}(\sigma,\nabla b) \prescript{\s}{}\nabla^2_{ij}b \le 0$, i.\,e.\ $b$ is a static supersolution. 
	\end{enumerate}
\end{corollary}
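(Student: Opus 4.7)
The plan is to read off the corollary as an immediate consequence of the preceding lemma by evaluating the asymptotic estimate for $b$ at the inner boundary $r=r_0$ and then choosing $r_0$ sufficiently large.

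First, I would invoke the lemma: it provides some threshold $r_1^{\text{lem}}>0$ (depending only on $\sigma$) and, for every $r_0\ge r_1^{\text{lem}}$, a rotationally symmetric function $b=b_{r_0}\colon \R^n\setminus B_{r_0}(0)\to\R$ which is a static supersolution and satisfies
\[
b_{r_0}(x)\simeq r_0^{\,n-\frac32}\,|x|^{-(n-\frac52)}.
\]
Here the implicit constants in $\simeq$ depend only on $n$ and $\sigma$. Properties (i), (ii) and (iv) of the corollary are then built in: $b_{r_0}>0$ because the lower estimate above is strictly positive; $b_{r_0}(x)\to 0$ as $|x|\to\infty$ because $n\ge 3$ forces $n-\tfrac52\ge\tfrac12>0$; and $b_{r_0}$ is a static supersolution by construction.

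Next I would verify the boundary size condition (iii). Evaluating the asymptotic estimate at $|x|=r_0$ gives
\[
b_{r_0}(r_0)\simeq r_0^{\,n-\frac32-(n-\frac52)} = r_0,
\]
so there exists a universal constant $c>0$ (depending only on $n$ and $\sigma$) with $b_{r_0}(r_0)\ge c\,r_0$. Given $h>0$ and the prescribed $r_1>0$ from the corollary, I would set
\[
r_0 := \max\!\left(r_1,\, r_1^{\text{lem}},\, h/c\right).
\]
Then $r_0\ge r_1^{\text{lem}}$ so the lemma applies, $r_0\ge r_1$ as required by the corollary, and $b_{r_0}(r_0)\ge c\,r_0\ge h$, giving (iii).

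There is no real obstacle here: the entire content of the corollary is already present in the lemma, and the only thing to observe is that the particular scaling $r_0^{n-3/2}|x|^{-(n-5/2)}$ at $|x|=r_0$ collapses to $r_0$, which diverges as $r_0\to\infty$. This is precisely why the supersolution was engineered with that exponent in the previous argument, so that the same family $\{b_{r_0}\}_{r_0\ge r_1^{\text{lem}}}$ can be used as a boundary barrier of arbitrarily prescribed height $h$.
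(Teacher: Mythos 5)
Your proposal is correct and matches the paper's reasoning exactly: the corollary is stated in the text as an immediate consequence of the lemma, justified precisely by the observation that $b(x)\simeq r_0^{n-\frac32}\,|x|^{-(n-\frac52)}$ evaluates to $\simeq r_0$ at $|x|=r_0$, so enlarging $r_0$ makes the barrier arbitrarily high at the inner boundary. Your explicit choice $r_0=\max(r_1,r_1^{\mathrm{lem}},h/c)$ just makes this precise.
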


\begin{remark}\label{rem barrier at infinity}The following observations are crucial:
	\begin{enumerate}[leftmargin=1cm,label=(\roman*)]
		\item Because the operator only depends on derivatives, $b+\eps$ is a supersolution for any $\eps\in \R$.
		\item Because of $g^{ij}(\sigma, -\nabla b)= g^{ij}(\sigma, \nabla b)$, $-b$ is a subsolution and yields a barrier from below.
	\end{enumerate}
\end{remark}

%%%%%%%%%%%%%%%%%%%%%%%%%%%%%%%%%%%
% INTERPOLATION OF INITIAL DATA
%%%%%%%%%%%%%%%%%%%%%%%%%%%%%%%%%%%
\section{Interpolation of initial data}\label{Interpolation}
\begin{proposition}\label{interpolation_thm}
Let $(M,\s)$ be an asymptotically flat Riemannian manifold and $R_0>0$ so large that we have an asymptotic coordinate system
\begin{align*}
\varphi:M\setminus K\to \R^n\setminus B_{R_0}(0).
\end{align*} Let $u_0\in C^{0,1}(M)$ be uniformly spacelike with Lipschitz constant $1-\varepsilon$. Then for every  $R_2>R_1\geq R_0$, there exists a metric $\wt{\s}$ and a function $\wt{u}_0$ such that
	\begin{align*} 
		\wt{\s}_{ij} = \s_{ij} \quad &\text{and} \quad \wt{u}_0 = u_0 \qquad \text{on} \;\; K_{R_1},\\
		\wt{\s}_{ij} = \delta_{ij} \quad &\text{and} \quad \wt{u}_0 = 0 \qquad \text{on} \;\; M\setminus K_{R_2}
	\end{align*}
	and such that $\wt{u}_0$ is uniformly spacelike with respect to the constant $\varepsilon>0$ from above. \end{proposition}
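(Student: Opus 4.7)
The plan is to interpolate both $\sigma$ and $u_0$ simultaneously by means of a single radial cutoff in the asymptotic chart. Fix a nonincreasing $\chi\in C^\infty([0,\infty);[0,1])$ with $\chi=1$ on $[0,R_1]$ and $\chi=0$ on $[R_2,\infty)$, and transport it to $M$ via the asymptotic diffeomorphism $\varphi$ (extended by $1$ on $K_{R_0}$). For the metric, set
\[
	\widetilde\sigma_{ij}\coloneqq\chi\,\sigma_{ij}+(1-\chi)\,\delta_{ij}
\]
in the asymptotic coordinates and $\widetilde\sigma=\sigma$ on $K_{R_0}$. The agreement conditions on $K_{R_1}$ and on $M\setminus K_{R_2}$ are then manifest, and positive-definiteness follows from $|\sigma_{ij}-\delta_{ij}|\lesssim\omega(r)\to0$ once $R_0$ is taken large enough.

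\textbf{The function.} The natural first attempt is $\widetilde u_0\coloneqq\chi u_0$, which already realises the prescribed values on $K_{R_1}$ and outside $K_{R_2}$. The nontrivial point is the Lipschitz estimate $|\nabla\widetilde u_0|_{\widetilde\sigma}\leq 1-\varepsilon$. Writing
\[
	\nabla\widetilde u_0=\chi\,\nabla u_0+u_0\,\nabla\chi,
\]
the first summand satisfies $|\chi\nabla u_0|_{\widetilde\sigma}\leq(1-\varepsilon)\bigl(1+O(\omega(r))\bigr)$ from the $\sigma$-Lipschitz hypothesis and the $\omega(r)$-closeness of $\widetilde\sigma$ and $\sigma$, so that after shrinking $\varepsilon$ by a controlled amount only the cross term $u_0\,\nabla\chi$, supported in the annulus $K_{R_2}\setminus K_{R_1}$, remains to be absorbed.

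\textbf{Main obstacle.} The crux is controlling the cross term. For a radial $\chi$ one has $|\nabla\chi|_{\widetilde\sigma}\lesssim 1/(R_2-R_1)$, while in the annulus $|u_0|$ is bounded only by $\sup_{K_{R_1}}|u_0|+(1-\varepsilon)(R_2-R_1)$ via the global Lipschitz bound, so the naive product yields $|\nabla\widetilde u_0|_{\widetilde\sigma}\leq 1-\varepsilon+C\sup_{\partial K_{R_1}}|u_0|/(R_2-R_1)+O(\omega(r))$, which is $\leq 1-\varepsilon$ only once $R_2$ is sufficiently large compared to $R_1$ and $\sup_{\partial K_{R_1}}|u_0|$. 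To obtain the proposition in the generality stated, one replaces $\chi u_0$ inside the annulus by a McShane-type $(1-\varepsilon)$-Lipschitz extension of the prescribed boundary data $u_0|_{\partial K_{R_1}}$ and $0|_{\partial K_{R_2}}$, followed by mollification; the operative compatibility condition $\dist_{\widetilde\sigma}(\partial K_{R_1},\partial K_{R_2})\cdot(1-\varepsilon)\geq\sup_{\partial K_{R_1}}|u_0|$ is the point where one uses that the annulus $K_{R_2}\setminus K_{R_1}$ is wide enough. After absorbing the $\omega$-errors from the metric interpolation into a slightly smaller $\varepsilon$ and then restoring the original constant by choosing $R_0$ larger, this produces the required $\widetilde u_0$.
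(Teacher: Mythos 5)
Your proposal correctly isolates the crux — the cross term $u_0\,\nabla\chi$ in the transition annulus — but the resolution you offer does not prove the proposition as stated. The statement asserts the existence of $\wt\s$ and $\wt u_0$ for \emph{every} $R_2>R_1\geq R_0$, with no relation between the annulus width $R_2-R_1$ and $\sup_{\partial K_{R_1}}|u_0|$. Your McShane-type extension hinges on the compatibility condition $\dist_{\wt\s}(\partial K_{R_1},\partial K_{R_2})\,(1-\varepsilon)\geq\sup_{\partial K_{R_1}}|u_0|$, and since your $\wt\s=\chi\sigma+(1-\chi)\delta$ is uniformly comparable to $\sigma$, that distance really is $\simeq R_2-R_1$ and cannot be enlarged. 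For a thin annulus and non-small boundary data no $(1-\varepsilon)$-Lipschitz function can descend from $u_0|_{\partial K_{R_1}}$ to $0$ on $\partial K_{R_2}$, so the extension simply does not exist. Enlarging $R_0$ does not repair this: the hypotheses do not force $u_0$ to be small near infinity (that decay is assumed elsewhere in the paper, not here), and in any case the claim must hold for $R_2$ arbitrarily close to $R_1$.

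The idea you are missing is that the interpolated metric need not be a convex combination of $\sigma$ and $\delta$: the paper inserts a large conformal factor, passing from $\sigma$ through $\lambda\sigma$ (with $\lambda\gg1$) before interpolating to $\delta$, and only cuts off $u_0$ in the region where the metric is already $\lambda\sigma$. Since $|\nabla(\psi u_0)|^2_{\lambda\sigma}=\tfrac1\lambda|\nabla(\psi u_0)|^2_{\sigma}$, the cross term is suppressed by choosing $\lambda$ large depending on $u_0$, $\varepsilon$ and the cutoff — equivalently, the annulus is made metrically as wide as needed regardless of $R_2-R_1$, which is exactly the quantitative obstruction you ran into. A side benefit of this arrangement (staggered cutoffs so that $\wt u_0\neq0$ only where $\wt\s$ is a pointwise multiple of $\sigma$) is that the gradient estimate is exact, with no $O(\omega(r))$ errors to absorb and hence no loss in the constant $\varepsilon$.
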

\begin{proof}For the proof, we may assume that $u_0\in C^1(M)$ since the Lipschitz case follows immediately from an approximation argument.
	For notational convenience, set $S_1\coloneqq R_1$ and $S_4\coloneqq R_2$. Pick constants $S_2,S_3$ such that $S_1<S_2<S_3<S_4$
	and choose smooth cutoff functions $\psi_i:M\to [0,1]$, $i=1,2,3$ such that
	\begin{align*}
	\psi_i\equiv 1 \quad \text{ on }K_{S_i},\qquad 	\psi_i\equiv 0 \quad \text{ on }M\setminus K_{S_{i+1}}.
	\end{align*}
		Now fix $\lambda > 1$ and let 
	\begin{align*} 
	\wh{\s}_{ij} &= \psi_1 \s_{ij} + (1-\psi_1) \lambda \s_{ij} \,,  \\
	\wt{\s}_{ij} &= \psi_3 \wh{\s}_{ij} + (1-\psi_3) \delta_{ij} = \psi_3 (\psi_1 \s_{ij} + (1-\psi_1) \lambda \s_{ij}) + (1-\psi_3) \delta_{ij}
	\end{align*} 
		and
	\[ 
	\wt{u}_0 = \psi_2 u_0 \,. 
	\]
	Then we have $	\wt{\s}_{ij} = \s_{ij}$ and $\wt{u}_0 = u_0 $ on $K_{S_1}$
	and $\wt{\s}_{ij} = \delta_{ij} $ and $\wt{u}_0 = 0$ outside of $K_{S_4}$.
	Because $\wt{u}_0$ vanishes even outside of $K_{S_3}$, it remains to prove that  $\wt{u}_0$ is uniformly spacelike on $K_{S_3}\setminus K_{S_1}$. On $K_{S_2}\setminus K_{S_1}$, we have 
		\[ 
	\wt{u}_0 = u_0 \quad \text{and} \quad \wt{\s}_{ij} = \wh{\s}_{ij} \,. 
	\]
and since $\wh{\s}_{ij} = f \s_{ij}$ with a function $f \ge 1$, we have
\begin{align*} 
|\n \wt{u}_0|^2_{\wt{\sigma}} &= |\n u_0|^2_{\wh{\sigma}} =  \frac{1}{f} |\n u_0|^2_{\s}\le (1-\ve)^2
\end{align*}
in this region. Finally, let us check the region $K_{S_3}\setminus K_{S_2}$. There we have $	\wt{\s}_{ij} = \l \s_{ij}$ so that
	\begin{align*}
	|\n \wt{u}_0|^2_{\wt{\sigma}} = \frac{1}{\l} 	|\n \wt{u}_0|^2_{{\sigma}}\le \frac{2}{\l} \Bigl(u_0^2 |\n \psi_2|^2_{\s} + \psi_2^2|\n u_0|^2_{\s}\Bigr)\leq (1-\ve)^2 \,. 
	\end{align*}
	provided that $\l > 1$ was chosen large enough depending on $u_0,\varepsilon$ and $\psi_2$.
	This finishes the proof.
\end{proof}

\begin{remark}
	Note that in this construction the geometry of $\tilde{\s}$ can be chosen uniformly bounded (depending on $\sup_M |u_0|$, $\varepsilon$ and the difference $R_2-R_1$). Note further that $\tilde{u}_0(x)$ has the same sign as $u_0(x)$ for all $x\in M$ and that $|\tilde{u}_0|\leq |u_0|$. 
\end{remark}

%%%%%%%%%%%%%%%%%%%%%%%%%%%%%%%%%%%
% NO LIFT-OFF
%%%%%%%%%%%%%%%%%%%%%%%%%%%%%%%%%%%
\section{No lift-off}\label{noliftoff}

In the following (Lemma \ref{lem no lift-off for constructed} and Remark \ref{rem no lift-off}) we will show that our constructed solution has no lift-off behaviour at infinity.
Because our construction involves approximation by compact problems, the proof is an easy maximum principle argument in this case.
On the other hand, excluding lift-off behavior directly for any given solution needs an additional argument to compensate for the non-compactness and is therefore treated afterwards (Theorem \ref{thm no lift-off}).
\begin{lemma} \label{lem no lift-off for constructed}
Let $\sigma$ be an asymptotically flat metric on $\R^n$ $(n\ge 3)$ and $u_0\in C^{0,1}(\R^n)$ a function which is uniformly spacelike and satisfies $|u_0(x)|\to 0$ as $|x|\to \infty$. 	Let $R>0$ and  $u_{0,R}$ be a function constructed from $u_0$ according to Proposition \ref{interpolation_thm} that vanishes outside of $B_R(0)$ and is equal to $u_0$ on a smaller ball. 
Finally, let $T\in (0,\infty]$ 
	and assume that $u_R\in C^{2;1}\big(B_R(0)\times(0,T)\big)\cap C^0\big(\overline{B_R(0)}\times [0,T]\big)$ is a solution of the initial boundary value problem
	\begin{equation} \label{eq ibvp}
		\begin{cases}
			\partial_t u_R(x,t)=g^{ij}(\sigma,\nabla u_R)  \prescript{\s}{}\nabla^2_{ij} u_R(x,t) & \text{for } (x,t) \in B_R(0)\times (0,T),\\
			u_R(x,0) = u_{0,R}(x) & \text{for } x\in B_R(0),\\
			u_R(x,t) = 0 & \text{for } (x,t) \in \partial B_R(0)\times [0,T],
		\end{cases}
	\end{equation}
	which is extended by zero for $|x|>R$. Then for every $\eps>0$,
%	We additionally assume that we are given $\eps>0$ and $r_1>0$ such that $|u_0(x)|\le\eps$
%	 for $|x| \ge r_1$.
 there is a function $b_\eps \colon \R^n\to \R_+$, depending only on  $\sigma$ (through \eqref{eq asym Eucl 1} and \eqref{eq asym Eucl 2}), $\eps$, and $u_0$, such that
	\begin{enumerate}[leftmargin=1cm,label=(\roman*)]
		\item $|u_R(x,t)| \le b_\eps(x)$ for $(x,t)\in \R^n\times (0,T)$ and 
		\item $b_\eps(x)\to \eps \text{ for } |x| \to \infty.$
	\end{enumerate}
\end{lemma}

\begin{remark} \label{rem no lift-off}
%	Suppose $r_1$ is given as a function of $\eps$ in the sense that for all $\eps>0$ $|u_0(x)|<\eps$ for $|x|\ge r_1(\eps)$.
Let $b(x)\coloneqq \inf_{\eps>0}b_{\eps}(x)$, which depends on $\sigma$ and $u_0$. Then
	\begin{enumerate}[leftmargin=1cm,label=(\roman*)]
		\item $|u_R(x,t)| \le b(x)$ for $(x,t)\in \R^n\times (0,T)$ and
		\item $b(x)\to 0 \text{ for } |x| \to \infty.$
	\end{enumerate}
\end{remark}

\begin{proof}[Proof of Lemma \ref{lem no lift-off for constructed}]
	The maximum principle implies $\sup_{\R^n\times [0,T)} u_R = \sup_{\R^n} u_{0,R}$. For $\eps>0$, there exists $r_1>0$ such that $|u_0(x)|<\eps$ for $|x|\ge r_1(\eps)$.
	Using $h\coloneqq \sup_{\R^n\times [0,T)}u_R$, Corollary \ref{cor barrier at infinity} gives an $r_0\ge r_1$ and a function $b\colon \R^n\setminus B_{r_0}(0)\to\R_+$ with the properties listed there. Instead of that we will use $b_\eps\coloneqq b+\eps$ (cf. Remark \ref{rem barrier at infinity}).
	Finally extend $b_\eps$ to $B_{r_0}(0)$ by $h+\varepsilon$.
		The asserted property (ii) is readily seen.
	
	For $|x|< r_0$, we have the inequality $u_R(x,t) \le h\le b_\eps(x)$ and for $|x|\ge R$, we have $|u_R(x,t)|=0<b_\eps(x)$, so it only remains to prove assertion (i) in the region $B_R(0)\setminus B_{r_0}(0)$ (if non-empty).
	To achieve this we use the comparison principle.
	Because of $r_0 \ge r_1$ we have by assumption $b_\eps(x)>\eps \ge u_R(x,0)$ in $B_R(0)\setminus B_{r_0}(0)$.
	By the definition of $h$ we have $b_\eps(x)\ge h+\eps > u_R(x,t)$ for $|x|=r_0$ and also  $b_\eps(x)>0=u_R(x,t)$ for $|x|=R$.
	So we have $u_R(x,t)< b_{\varepsilon}(x)$ on the parabolic boundary of $\bigl(B_R(0)\setminus B_{r_0}(0)\bigr) \times (0,T)$.
	Note that $b_\eps$ is a supersolution in the viscosity sense as a minimum of two supersolutions. Thus, the comparison principle implies $u_R(x,t)<b_\eps(x)$ on $B_R(0)\setminus B_{r_0}(0)$.
	
	An analogous argument shows $u_R(x,t)>-b_\eps(x)$ on $B_R(0)\setminus B_{r_0}(0)$ (cf.\ Remark \ref{rem barrier at infinity}).
\end{proof}
\begin{remark} \label{rem no lift-off_AEcase}
	The assertions of Lemma \ref{lem no lift-off for constructed}, and therefore of Remark \ref{rem no lift-off}, also hold when replacing $(\R^n,\sigma)$ by an asymptotically flat manifold and $B_R(0)$ by $K_R$. In this case, one uses constant barriers in the interior of the asymptotic neighbourhood.
\end{remark}
Later on, we will show that the $u_R$ converge (for $R\to\infty$) to a solution of the initial value problem on the whole manifold that preserves the asymptotic behaviour. In the following, we will show that this behaviour is preserved for any solution of the initial value problem. Uniqueness of such a solution is shown by using the comparison principle.
\begin{theorem} \label{thm no lift-off}
	Let $T\in (0,\infty]$ and let $\sigma$ be an asymptotically flat metric on $\R^n$ $(n\ge 3)$. Suppose $u_0\in C^{0,1}(\R^n)$ is uniformly spacelike and satisfies
	\begin{equation} \label{eq thm no lift-off 1}
		u_0(x)\to 0 \quad \text{for} \quad |x|\to \infty.
	\end{equation}
	Let $u\in C^{2;1}\bigl(\R^n\times (0,T)\bigr)\cap C^0\bigl(\R^n\times [0,T)\bigr)$ be a solution of \eqref{eq MCF} with $u(\cdot,0)=u_0$. 
	Further suppose that the solution is uniformly spacelike, i.\,e.\ there exists $\mu>0$ such that $|\nabla u|_\sigma^2\le 1- \mu$.
	Then for any $\varepsilon>0$ and any $t^*\in(0,T)$, there exists $R>0$, such that
	\begin{equation}
		\label{eq thm no lift-off 2}
		|u(x,t)| \le \varepsilon \qquad \text{for all} \quad |x|\ge R \quad \text{and} \quad t\in(0,t^*] \,.
	\end{equation}
	Moreover, this convergence is uniform in $t$ and in $u$ in the following sense.
	There is a function $b\colon \mathbb{R}^n\to \R_+$, dependent only on $u_0$ and $\sigma$, such that
	\begin{enumerate}[leftmargin=1cm,label=(\roman*)]
		\item $|u(x,t)|\le b(x)$ for $(x,t)\in \mathbb{R}^n\times (0,T)$ and
		\item $b(x)\to 0$ for $|x|\to \infty$.
	\end{enumerate}
\end{theorem}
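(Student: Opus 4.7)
The plan is to adapt the barrier comparison of Lemma \ref{lem no lift-off for constructed}, which on the bounded Dirichlet domain $B_R$ is a direct application of the parabolic comparison principle, to the noncompact Cauchy problem on $\R^n$. The barrier is again the supersolution $b_\varepsilon$ coming from Corollary \ref{cor barrier at infinity}, but the comparison now has to be carried out on the unbounded exterior $(\R^n \setminus \overline{B_{r_0}}) \times [0,T)$, which is precisely the ``additional argument to compensate for the non-compactness'' alluded to in the discussion preceding the theorem.

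First I would fix $\varepsilon > 0$ and, using \eqref{eq thm no lift-off 1}, choose $r_1 > 0$ so that $|u_0| \le \varepsilon/2$ on $\{|x| \ge r_1\}$; since $u_0$ is continuous and vanishes at infinity, $h := \sup_{\R^n} |u_0| < \infty$. Corollary \ref{cor barrier at infinity} then supplies $r_0 \ge r_1$ and a static supersolution $b$ on $\R^n \setminus B_{r_0}(0)$ with $b \ge h$ on $\{|x|=r_0\}$, $b > 0$, and $b(x) \to 0$ as $|x| \to \infty$. By Remark \ref{rem barrier at infinity}, $b_\varepsilon := b + \varepsilon$ is again a static supersolution and $-b_\varepsilon$ is a static subsolution.

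The heart of the proof is to show $|u| \le b_\varepsilon$ on $(\R^n \setminus \overline{B_{r_0}}) \times [0,T)$. Before comparing with the barrier I need the a priori bound $|u(x,t)| \le h$ on the whole cylinder $\R^n \times [0,T)$. By Lemma \ref{lem:evu}, $u$ satisfies $(\partial_t - \Delta_g) u = 0$; uniform spacelikeness of $u$ combined with Lemma \ref{uniform_metrics} and the asymptotic flatness of $\sigma$ make $\Delta_g$ uniformly elliptic with bounded coefficients in asymptotic Cartesian coordinates, while the Lipschitz bound $|\nabla u|_\sigma \le \sqrt{1-\mu}$ constrains $u$ to grow at most linearly in space. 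A Tychonoff-type parabolic maximum principle for linear equations with bounded coefficients and linearly growing solutions then applies to $u \pm h$, yielding $|u| \le h$. With this in hand, on the parabolic boundary of $\Omega := (\R^n \setminus \overline{B_{r_0}}) \times [0,T)$ one has $u - b_\varepsilon \le h - (h+\varepsilon) = -\varepsilon$ on $\{|x| = r_0\}$ and $u_0 - b_\varepsilon \le \varepsilon/2 - \varepsilon = -\varepsilon/2$ at $t = 0$. Subtracting the MCF equation for $u$ from the static supersolution inequality for $b_\varepsilon$ and applying the mean value theorem to the quasilinear operator produces a linear parabolic inequality
\[
\partial_t w - a^{ij}(x,t)\, \prescript{\s}{}\nabla^2_{ij} w - c^k(x,t)\, \nabla_k w \;\le\; 0
\]
for $w := u - b_\varepsilon$, whose coefficients are bounded on $\Omega$ by uniform spacelikeness, asymptotic flatness, and the decay estimates on $\nabla b$ and $\nabla^2 b$ derived in Section \ref{BarrierConstruction}, and whose solution $w$ has at most linear spatial growth. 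The same noncompact maximum principle then forces $w \le 0$, and the analogous subsolution argument yields $u \ge -b_\varepsilon$.

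Extending $b_\varepsilon$ by the constant $h + \varepsilon$ on $B_{r_0}(0)$ produces a function on $\R^n$ pointwise dominating $|u|$, depending only on $\sigma$, $h$, $\varepsilon$, and $r_1(\varepsilon)$, and approaching $\varepsilon$ at infinity. Defining $b(x) := \inf_{\varepsilon>0} b_\varepsilon(x)$ exactly as in Remark \ref{rem no lift-off} then furnishes the required function depending only on $u_0$ and $\sigma$, with $b(x) \to 0$ as $|x| \to \infty$. The main obstacle is the noncompact parabolic maximum principle, which is needed both to establish the a priori bound $|u| \le h$ and to run the barrier comparison; the uniform spacelikeness of $u$ is the crucial input that simultaneously delivers uniform parabolicity of the linearised operator, bounded coefficients in asymptotic coordinates, and at-most-linear spatial growth of the comparands, rendering the maximum principle applicable.
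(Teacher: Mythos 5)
Your argument is correct, but it compensates for the non-compactness in a genuinely different way than the paper does. You keep the static barrier $b_\varepsilon$ of Corollary \ref{cor barrier at infinity} and run the comparison on the unbounded exterior $(\R^n\setminus\overline{B_{r_0}})\times[0,T)$, paying for this with two invocations of a Phragm\'en--Lindel\"of (Tychonoff-type) maximum principle for linear uniformly parabolic operators with bounded coefficients on unbounded domains --- once to get $|u|\le\sup_{\R^n}|u_0|$ and once for $w=u-b_\varepsilon$. The hypotheses you verify (uniform ellipticity from uniform spacelikeness, bounded drift from the boundedness of $\nabla^2 b$ together with the fact that the segment from $\nabla b$ to $\nabla u$ stays uniformly spacelike, and at most linear spatial growth from the Lipschitz bound) are the right ones, so the argument goes through. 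The paper instead avoids any noncompact maximum principle: it constructs the time-dependent barriers $\wh b_{x_0,t_0}(x,t)=\sqrt{2n(t-t_0)+|x-x_0|^2}+\alpha t$ on \emph{compact} balls $B_{\rho(t_0)}(x_0)$ far out, rules out a first touching on the spatial boundary by the gradient mismatch \eqref{eq lc boundary} against $|\nabla u|\le\sqrt{1-\mu}$ (this is Proposition \ref{gradient_max_principle}), and then makes the barrier almost static on $[0,t^*]$ by taking $t_0\ll-1$ and $\alpha$ small, which yields $u(x_0,t^*)<\varepsilon$; only afterwards does it return to the static barrier to get assertions (i) and (ii). Your route is shorter and produces the quantitative bound $|u|\le b_\varepsilon$ in a single comparison, but it imports an external theorem and is tied to $n\ge3$ throughout; the paper's route is self-contained, uses the uniform spacelikeness of $u$ in an essentially geometric way at the barrier's boundary, and, as remarked after the theorem, its light-cone barriers also establish \eqref{eq thm no lift-off 2} in dimensions $n=1,2$, where the static supersolution $b$ does not exist.
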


\begin{remark}	
	The exact dependence on $u_0$ in Theorem \ref{thm no lift-off} is given through 
	$\sup_{\mathbb{R}^n} u_0$ and the function $r_1(\eps) \coloneqq\sup \{ |x| : |u_0(x)|>\eps \}$.
	The dependence on $\sigma$ is given through \eqref{eq asym Eucl 1} and \eqref{eq asym Eucl 2}.
\end{remark}

\begin{remark}
	The barrier construction explained in the proof below also works for dimensions $n=1,2$. As a consequence, 
	\eqref{eq thm no lift-off 2} also holds in these dimensions but we do not get (i) and (ii) in Theorem \ref{thm no lift-off} 
	because Lemma  \ref{lem no lift-off for constructed} and Remark \ref{rem no lift-off} cannot be used in these cases.
\end{remark}

\begin{proof}[Proof of Theorem \ref{thm no lift-off}]
	The essential part of the proof is to establish \eqref{eq thm no lift-off 2}. Once this is
	shown, we consider $b_{\varepsilon}$ as defined in the proof of Lemma \ref{lem no lift-off for constructed}.
	We deduce from \eqref{eq thm no lift-off 2} that for each $t^*\in(0,T)$ and
	each $\zeta$ with $0<\zeta<\varepsilon$ there exists $R\ge 1$, such that
	$b_{\varepsilon} - u \ge \zeta$ in $\bigl(\Rb^n \setminus B_R(0) \bigr)\times[0,t^*)$.
	Hence, we can apply the maximum principle, albeit $\Rb^n$ is not compact, and
	deduce that $b_{\varepsilon}\ge u$. Then we take the infimum over $\varepsilon>0$
	as in Remark \ref{rem no lift-off} and arrive at the second assertion about the barrier function
	
%	Once \eqref{eq thm no lift-off 2} is established we can proceed as in the proof of Lemma \ref{lem no lift-off for constructed} 
%	and as in Remark \ref{rem no lift-off} to arrive at the second assertion about the barrier function.
%	Thus, we are left to prove \eqref{eq thm no lift-off 2}.
%	To do so, we use barriers far out which control the lift-off.
%	More precisely, we implement the following ideas: Far out the metric $\sigma$ is close to being Euclidean.
%	A slight modification of a solution that came out of the light cone in Minkowski space a long time ago gives a barrier 
%	which moves very slowly. The further we go out the slower we can make it.
%	To be able to compare with the barrier, the uniform spacelikeness of our solution is essential, because it excludes 
%	touching at the boundary where our barrier has gradient close to one.
	
	Now we turn to the technical details.
	Firstly, we are assuming that we work far out where $\sigma$ is close to the Euclidean metric.
	To begin with, by choosing $\mu>0$,
	we may assume that
	\begin{equation}
		|\nabla u|^2\le 1-\mu \label{eq spacelikeness bound}
	\end{equation}
	holds in the region we consider.	
	Now we define the barrier function.
	Let $\alpha>0$. For $t_0<-1$ and $x_0\in \mathbb{R}^n$ we define
	\begin{multline*}
		\wh{b}(x,t)\equiv \wh{b}_{x_0,t_0}(x,t) \coloneqq \sqrt{2n(t-t_0)+|x-x_0|^2} + \alpha \, t, \\ (x,t)\in B_{\rho}(x_0)\times [0,-t_0],
	\end{multline*}
	where $\rho \equiv \rho(t_0) \equiv \rho(t_0,\mu) \coloneqq \sqrt{\frac{2-\mu}{\mu}4n(-t_0)}$.
	We are now going to prove that the function $\wh{b}$ satisfies
		\begin{align}
			\partial_t \wh{b} & = \frac{n}{\sqrt{2n(t-t_0)+|x-x_0|^2}} + \alpha,\label{eq lc derivative 1}\\
			\wh{b}_i & = \frac{(x-x_0)_i}{\sqrt{2n(t-t_0)+|x-x_0|^2}},\label{eq lc derivative 2}\\
			\wh{b}_{ij} & = \frac{1}{\sqrt{2n(t-t_0)+|x-x_0|^2}} \left( \delta_{ij} - \frac{ (x-x_0)_i (x-x_0)_j}{2n(t-t_0)+|x-x_0|^2}\right),\label{eq lc derivative 3}\\
			\partial_t \wh{b}&-g^{ij}(\delta,\nabla \wh{b}) \wh{b}_{ij}  = \alpha, \label{eq lc solution}\\
			1&-|\nabla \wh{b}|^2  \ge \frac{\mu}{4}, \label{eq lc gradbound}  \\
			\wh{b}_i&\frac{(x-x_0)^i}{|x-x_0|}  \ge \sqrt{1-\frac{\mu}{2}} \quad \text{for } |x-x_0|=\rho. \label{eq lc boundary}
		\end{align}
	Direct calculations yield equations \eqref{eq lc derivative 1}, \eqref{eq lc derivative 2}, and \eqref{eq lc derivative 3}.
		For the ease of notation we may ignore $\alpha$ and $x_0$ (by setting them equal to zero), for it is easy to 
		incorporate these afterwards.
		Now we address \eqref{eq lc solution}.
		We have $\wh{b}_i = \frac{x_i}{\wh{b}}$ and
		\begin{align*}
			g^{ij}(\delta,\nabla \wh{b})\wh{b}_{ij}
			&= \left(\delta^{ij} + \frac{x^i x^j}{\wh{b}^2-|x|^2}\right)\left(\frac{\delta_{ij}}{\wh{b}}-\frac{x_i x_j}{\wh{b}^3}\right) \\
			& = \frac{n}{\wh{b}}-\frac{|x|^2}{\wh{b}^3}+\frac{|x|^2}{\wh{b}\,(\wh{b}^2-|x|^2)} -\frac{|x|^4}{\wh{b}^3(\wh{b}^2-|x|^2)}\\
			& = \frac{n}{\wh{b}}- \frac{(\wh{b}^2-|x|^2)|x|^2 - \wh{b}^2 |x|^2 + |x|^4}{\wh{b}^3(\wh{b}^2-|x|^2)} = \frac{n}{\wh{b}}\\
			& = \partial_t \wh{b} \,.
		\end{align*}
		The inequality \eqref{eq lc gradbound} is shown by
		\begin{equation}
			1-|\nabla \wh{b}|^2 = \frac{2n(t-t_0)}{2n(t-t_0)+|x|^2} \ge \frac{2n(-t_0)}{2n(-t_0)+\rho^2} = \frac{1}{1+\frac{2-\mu}{\mu}2} = \frac{\mu}{4-\mu} \ge \frac{\mu}{4}.
		\end{equation}
		Finally, we derive the estimate \eqref{eq lc boundary}: With $|x|=\rho$ and $t\le -t_0$, we get
		\begin{equation}
			\wh{b}_i\frac{x^i}{|x|} = \frac{|x|}{\wh{b}} = \sqrt{\frac{\rho^2}{2n(t-t_0)+\rho^2}}
				\ge \sqrt{\frac{\rho^2}{4n(-t_0)+\rho^2}}
				= \sqrt{\frac{\frac{2-\mu}{\mu}}{1+\frac{2-\mu}{\mu}}}
				= \sqrt{1-\frac{\mu}{2}}.
		\end{equation}
	The action of the parabolic operator on $\wh{b}$ can be estimated in the following way:
	\begin{align*}
		& \partial_t \wh{b} -g^{ij}(\sigma,\nabla \wh{b}) \prescript{\s}{}\nabla^2_{ij}\wh{b}\\
		& \quad \ge \partial_t \wh{b}-g^{ij}(\delta,\nabla \wh{b})\wh{b}_{ij} - \left| g^{ij}(\delta,\nabla \wh{b})\wh{b}_{ij} - g^{ij}(\sigma,\nabla \wh{b}) \prescript{\s}{}\nabla^2_{ij}\wh{b} \right|\\
		& \quad \ge \alpha - \left| g^{ij}(\sigma,\nabla \wh{b}) - g^{ij}(\delta,\nabla \wh{b}) \right| |\wh{b}_{ij}|
			- \left|g^{ij}(\sigma,\nabla \wh{b})\right| \left| \ChrM^k_{ij}\right| |\wh{b}_l| \,.
	\end{align*}
	We may use \eqref{eq lc gradbound}, the calculation of \eqref{eq metric comparison}, $|\wh{b}_{ij}|\lesssim 1$ for $-t_0\gg1$, and \eqref{eq asym Christoffel} to conclude
	\begin{equation} \label{eq lc supersolution}
		\partial_t \wh{b} -g^{ij}(\sigma,\nabla \wh{b}) \prescript{\s}{}\nabla^2_{ij}\wh{b} > 0
	\end{equation}
	if we are sufficiently far out, i.\,e.\ $|x_0|-\rho(t_0)$ is sufficiently large depending on $\alpha$, $\mu$, and $\sigma$ only.
	
	We have shown that $\wh{b}+h$ ($h\in \mathbb{R}$) is an upper barrier for $u$ on $B_\rho(x_0)\times [0,-t_0]$ ($|x_0|$ large) if $\wh{b}+h>u$ holds initially: 
	Because of \eqref{eq spacelikeness bound} and \eqref{eq lc boundary} there can not be a first contact point of $u$ and $\wh{b}$ at $\partial B_\rho(x_0)\times (0,-t_0)$. By \eqref{eq lc supersolution} and {\eqref{eq MCF}}, there can not be a first contact in the interior either. For more a more detailed exposition, see Proposition \ref{gradient_max_principle} below.
	
	To prove \eqref{eq thm no lift-off 2} we show that for any $t^*\in (0,T)$ and any $\varepsilon>0$ there is $R>0$ such that
	% $u(x,t^*)<\varepsilon$ 
	$u(x,t) < \varepsilon$
	for any $|x|>R$
	and any $t\in(0,t^*]$.
	So let $t^*\in (0,T)$ and $\varepsilon>0$ be given.
	By \eqref{eq thm no lift-off 1} there is $R_0$ such that $u(x,0)<\frac{\varepsilon}{2}$ for $|x|\ge R_0$.
	Let us choose $|t_0|$ so large and $\alpha$ so small that $\partial_t \wh{b}_{x_0,t_0}(x_0,t)<\frac{\varepsilon}{2t^*}$
	for all $t\in[0,t^*]$.
	After that we fix an arbitrary $x_0\in \mathbb{R}^n$ with $|x_0|>R_0+\rho(t_0)$ and $|x_0|$ being so large that \eqref{eq spacelikeness bound} and \eqref{eq lc supersolution} hold
	in $B_{\rho}(x_0)\times(0,-t_0)$. Recall also \eqref{eq lc boundary}.
	Then $\wh{b}_{x_0,t_0}+h$ with $h=-\wh{b}_{x_0,t_0}(x_0,0)+\frac{\varepsilon}{2}$ is an upper barrier for $u$ in $B_\rho(x_0)\times (0,-t_0)$. We may assume $t^*<-t_0$.
	In particular, we obtain for $t\in(0,t^*)$
	\begin{equation*}
		u(x_0,t)<\wh{b}_{x_0,t_0}(x_0,t)+h \le \frac{\varepsilon}{2}+t^* \sup_{\tau\in [0,t^*]} \partial_t \wh{b}_{x_0,t_0}(x_0,\tau) \le \varepsilon \,,
	\end{equation*}
	which establishes \eqref{eq thm no lift-off 2}.
\end{proof}
To prove a version of this theorem for asymptotically flat manifolds,
we apply a comparison principle for barriers with large
gradient at the boundary and solutions with bounded gradient.
 In \cite[Lemma 4.1]{CSS07}, a similar
technique has been used. 
\begin{proposition}\label{gradient_max_principle}
	Let $\Omega$ be an open bounded subset of a complete manifold $M$
	with $\partial\Omega\in C^1$ and outer unit normal $\nu$. Let
	$0<T\le\infty$. Let
	$u\in C^{2;1}(M\times(0,T))\cap C^0(M\times[0,T))$ be a solution to
	\eqref{eq MCF} in $\Omega\times(0,T)$. Let
	$b\in C^{2;1}\bigl(\overline\Omega\times[0,T)\bigr)$ fulfill 
	\[\partial_t b\ge g^{ij}(\sigma,\nabla b) \prescript{\sigma}{}\nabla^2_{ij} b\quad\text{in }\Omega\times(0,T).\] 
	Assume that
	\[u(\cdot,0)\le b(\cdot,0)\quad\text{in }\Omega\quad\text{and}\quad
	\sup\limits_{\Omega\times(0,T)}|\nabla u|
	<\inf\limits_{\partial\Omega\times(0,T)}\langle\nabla
	b,\nu\rangle.\] Then
	\[u\le b\quad\text{in }\Omega\times[0,T).\]
\end{proposition}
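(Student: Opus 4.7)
I would argue by contradiction via a standard maximum principle with an $\varepsilon$-perturbation, splitting the first contact point into an interior case and a boundary case. Suppose there exists $(x_*, t_*) \in \Omega \times (0, T)$ with $u(x_*, t_*) > b(x_*, t_*)$, and fix $\varepsilon > 0$ small enough that the function $w_\varepsilon := u - b - \varepsilon(1 + t)$ is still positive at $(x_*, t_*)$. Then $w_\varepsilon(\cdot, 0) \le -\varepsilon < 0$ on $\Omega$, while $w_\varepsilon(x_*, t_*) > 0$. Because $\overline{\Omega}$ is compact and $w_\varepsilon$ is continuous on $\overline{\Omega} \times [0, T)$, the function $f(t) := \max_{\overline{\Omega}} w_\varepsilon(\cdot, t)$ is continuous with $f(0) < 0$, so there is a first time $t_1 \in (0, t_*]$ with $f(t_1) = 0$, attained at some $x_1 \in \overline{\Omega}$. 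Then $(x_1, t_1)$ is a global maximum of $w_\varepsilon$ over $\overline{\Omega} \times [0, t_1]$, giving in particular $\partial_t w_\varepsilon(x_1, t_1) \ge 0$.

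If $x_1 \in \Omega$, the usual interior maximum analysis yields $\nabla w_\varepsilon(x_1, t_1) = 0$ and $\prescript{\sigma}{}\nabla^2 w_\varepsilon(x_1, t_1) \le 0$. In particular $\nabla u = \nabla b$ at $(x_1, t_1)$, so $g^{ij}(\sigma, \nabla u) = g^{ij}(\sigma, \nabla b)$ there. Subtracting the supersolution inequality for $b$ from the equation for $u$ then gives
\[
\partial_t w_\varepsilon \le g^{ij}(\sigma, \nabla u)\,\prescript{\sigma}{}\nabla^2_{ij} w_\varepsilon - \varepsilon.
\]
Since $g^{ij}(\sigma, \nabla u)$ is positive definite (as $u$ is spacelike, i.e.\ $|\nabla u|_\sigma < 1$) and $\prescript{\sigma}{}\nabla^2 w_\varepsilon \le 0$, the right side is $\le -\varepsilon$, contradicting $\partial_t w_\varepsilon(x_1, t_1) \ge 0$.

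If $x_1 \in \partial \Omega$, the fact that $w_\varepsilon$ attains its maximum over $\overline{\Omega}$ at $x_1$ forces it to be non-increasing along the inward direction, i.e.\ $\langle \nabla w_\varepsilon, -\nu\rangle \le 0$ at $(x_1, t_1)$, equivalently $\langle \nabla u, \nu\rangle \ge \langle \nabla b, \nu\rangle$. Using $|\nu|_\sigma = 1$ and Cauchy--Schwarz, $\langle \nabla u, \nu\rangle \le |\nabla u|_\sigma$, whence
\[
\sup_{\Omega \times (0, T)} |\nabla u| \ge \langle \nabla u, \nu\rangle(x_1, t_1) \ge \langle \nabla b, \nu\rangle(x_1, t_1) \ge \inf_{\partial \Omega \times (0, T)} \langle \nabla b, \nu\rangle,
\]
contradicting the strict inequality of the hypothesis. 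Both cases thus excluded, sending $\varepsilon \to 0$ gives $u \le b$ on $\Omega \times [0, T)$.

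The main obstacle is the boundary case: it is precisely where the gradient-separation hypothesis replaces the missing direct ordering of $u$ and $b$ on $\partial \Omega$, and one must take care that $\nu$ is $\sigma$-unit so that Cauchy--Schwarz connects $\langle \nabla u, \nu\rangle$ cleanly to $|\nabla u|_\sigma$. The shift by $\varepsilon(1+t)$ is the standard double trick: the constant $\varepsilon$ keeps the initial data strictly below $b$ (ensuring $t_1 > 0$), while the linear $\varepsilon t$ produces the strict inequality $\partial_t w_\varepsilon \le -\varepsilon$ needed to close the interior case.
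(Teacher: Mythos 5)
Your proof is correct and follows essentially the same route as the paper's: the $\varepsilon(1+t)$ shift to make $b$ a strict supersolution with strict initial ordering, a first-contact argument in time, and the dichotomy between an interior touching (excluded because $\nabla u=\nabla b$ at the spatial maximum makes the two operators coincide, so the strict supersolution property gives $\partial_t w_\varepsilon\le-\varepsilon$) and a boundary touching (excluded by comparing $\langle\nabla b,\nu\rangle\le\langle\nabla u,\nu\rangle\le|\nabla u|$ against the hypothesis). The only cosmetic difference is that the paper additionally exhausts $\Omega$ by compacta $K_l$ and works on finite time intervals $[0,S]$, whereas you locate the first contact directly on $\overline\Omega\times[0,T)$; both are legitimate since the gradient hypotheses transfer to $\partial\Omega$ by continuity of $\nabla u$ and $\nabla b$.
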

\begin{proof}
	By replacing $b$ by $b+\varepsilon(1+t)$ and later considering
	$\varepsilon\searrow0$, we may assume without loss of generality
	that $\partial_t b> g^{ij}(\sigma,\nabla b) \prescript{\s}{} \nabla^2_{ij}b$
	and $u(\cdot,0)<b(\cdot,0)$. It suffices to prove the result for any
	finite $S$ such that $0<S<T$. We exhaust $\Omega$ by compact sets
	$K_l$, $l\in\mathbb N$, such that $K_l\nearrow\Omega$ and
	$\partial K_l\to\partial\Omega$ in $C^1$ in a local graph
	representation. We may assume without loss of generality that $l$ is
	large enough so that
	\begin{equation}
	\label{Db large at bdry eq}
	\sup\limits_{\Omega\times(0,S]} |\nabla u|< \inf\limits_{\partial
		K_l\times(0,S]} \langle\nabla b,\nu\rangle,
	\end{equation}
	where $\nu$ is the outer unit normal to $\partial K_l$. We fix $l$
	and argue by contradiction. Hence, we may assume that there exists a
	first $(x_0,t_0)\in K_l\times(0,S]$, such that
	$u(x_0,t_0)=b(x_0,t_0)$. Now, we distinguish two cases:
	\begin{itemize}
		\item If $x_0\in \partial K_l$, we still have $b-u\ge 0$ in
		$K_l\times\{t_0\}$ with equality at $x_0$ and deduce that
		$\langle\nabla(b-u),\nu\rangle\le0$ at $(x_0,t_0)$. We get
		$\langle\nabla b,\nu\rangle\le|\nabla u|$ there. This contradicts
		\eqref{Db large at bdry eq}. 
		\item A first touching of $u$ and $b$ in the interior of $K_l$,
		however, is excluded by the maximum principle as we have assumed
		that $b$ is a strict supersolution.
	\end{itemize}
	We obtain $u<b$ in $K_l\times[0,S]$. Now, we let $l\to\infty$ and
	$S\nearrow T$.  The claim follows.
\end{proof}
\begin{corollary}
	Theorem \ref{thm no lift-off} holds for asymptotically flat $(M^n,\sigma)$  with $n\geq3$.
\end{corollary}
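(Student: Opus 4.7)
The plan is to transplant the proof of Theorem \ref{thm no lift-off} to the manifold setting via the asymptotic coordinate chart $\varphi\colon M\setminus K\to\R^n\setminus B_{R_0}(0)$. In these coordinates $\sigma$ satisfies \eqref{eq asym Eucl 1}, \eqref{eq asym Eucl 2} verbatim, so the light-cone-type barriers $\hat{b}_{x_0,t_0}$ together with the computations \eqref{eq lc solution}--\eqref{eq lc boundary} and \eqref{eq lc supersolution} remain valid on any Euclidean coordinate ball $B_\rho(x_0)$ that is contained in the asymptotic end $M\setminus K_{R_0}$; this containment holds whenever $|\varphi(x_0)|-\rho(t_0)>R_0$, which is achievable by taking $|\varphi(x_0)|$ sufficiently large.

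For the convergence statement \eqref{eq thm no lift-off 2}, I would repeat the final paragraph of the proof of Theorem \ref{thm no lift-off} essentially word for word. Given $t^*\in(0,T)$ and $\varepsilon>0$, one picks $R_1$ with $|u_0(x)|<\varepsilon/2$ outside $K_{R_1}$ and chooses $|t_0|$ large, $\alpha$ small, so that $\partial_t\hat{b}_{x_0,t_0}(x_0,t)<\varepsilon/(2t^*)$. For $x_0\in M\setminus K_{R_0}$ with $|\varphi(x_0)|$ large enough that \eqref{eq spacelikeness bound} and \eqref{eq lc supersolution} hold on $B_\rho(x_0)$, the shifted barrier $\hat{b}_{x_0,t_0}+h$ with $h=-\hat{b}_{x_0,t_0}(x_0,0)+\varepsilon/2$ dominates $u$ initially; the uniform spacelikeness of $u$ and the boundary estimate \eqref{eq lc boundary} together with \eqref{eq lc supersolution} let Proposition \ref{gradient_max_principle} apply, and one reads off $u(x_0,t^*)\leq\varepsilon$. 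A symmetric lower barrier gives $|u(x,t^*)|<\varepsilon$ for all $x$ sufficiently far out.

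Once \eqref{eq thm no lift-off 2} is established, the barrier function $b$ with properties (i), (ii) is assembled as in Lemma \ref{lem no lift-off for constructed} and Remark \ref{rem no lift-off}, in the version described in Remark \ref{rem no lift-off_AEcase}: on $M\setminus K_{R_0}$ use the rotationally symmetric static supersolution from Corollary \ref{cor barrier at infinity} with boundary height $h=\sup_M|u_0|$ on $\partial K_{R_0}$, and extend by the constant $h$ over $K_{R_0}$ to obtain a viscosity supersolution $b_\varepsilon$ globally, tending to $\varepsilon$ at infinity. The main obstacle is that the comparison of $u$ and $b_\varepsilon$ is now posed on the non-compact set $(M\setminus K_{R_0})\times[0,T)$; this is exactly where the just-proved decay \eqref{eq thm no lift-off 2} enters, as it ensures $u-b_\varepsilon\leq 0$ at spatial infinity, so that a standard exhaustion argument combined with Proposition \ref{gradient_max_principle} yields the bound on all of $M$. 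Taking $b=\inf_{\varepsilon>0}b_\varepsilon$ as in Remark \ref{rem no lift-off} then produces a barrier with $b(x)\to 0$ at infinity, completing the proof.
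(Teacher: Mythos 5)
Your proposal is correct and follows essentially the same route as the paper: transplant the light-cone barriers $\wh{b}_{x_0,t_0}$ into the asymptotic coordinate chart, choose $x_0$ far enough out that $B_\rho(x_0)$ lies in the asymptotic end, and invoke Proposition \ref{gradient_max_principle} using that the barrier's boundary gradient \eqref{eq lc boundary} exceeds the uniform spacelikeness bound on $|\nabla u|$; the global barrier $b_\eps$ is then assembled exactly as in Lemma \ref{lem no lift-off for constructed} and Remark \ref{rem no lift-off_AEcase}. The paper's proof is just a terser version of the same argument.
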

\begin{proof}
The only case where the proof slightly differs from the proof of Theorem \ref{thm no lift-off} is the second last paragraph:
In this case one chooses $|x_0|$ (in an asymptotic neighbourhood) and $\rho>0$ so large that $M\setminus B_{\rho}(x_0)$ lies in an asymptotic coordinate neighbourhood and such that $|\nabla b|>\sup_M |\n u_0|$ on $\partial B_{\rho}(x_0)$. Then, $v+h$ is an upper barrier for $u$ by Proposition \ref{gradient_max_principle}.
\end{proof}

%%%%%%%%%%%%%%%%%%%%%%%%%%%%%%%%%%%
% BOUNDARY GRADIENT ESTIMATES
%%%%%%%%%%%%%%%%%%%%%%%%%%%%%%%%%%%
\section{Boundary gradient estimates}\label{bgestimates}
Our goal in this section is to prove boundary gradient estimates for the modified Dirichlet problem for the mean curvature flow.

\begin{proposition}\label{boundary_gradient_thm}
Let $(M,\sigma)$ be an asymptotically flat manifold and $u_0\in C^{0,1}(\R^n)$ be a function which is uniformly spacelike and satisfies $|u_0(x)|\to 0$ as $|x|\to \infty$.
 Let $r_0$ (depending on $(M,\sigma)$ and $u_0$) be so large that $\R^n\setminus B_{r_0}(0)$ lies in the image of an asymptotic coordinate system and such that there exists a rotationally symmetric static supersolution $b_{r_0}$ of the mean curvature flow with $b_{r_0}=\sup u_0+1$ at $\partial K_{r_0}$.
 Let $R>\max\{2,r_0\}$ and let $u_{0,R}$ be modified initial data and $\sigma_{R}$ be a modified metric, such that
$u_{0,R}=0$ and $\sigma_{R}=\delta$ on $M\setminus K_R$ and $u_{0,R}=u_0$ and $\sigma_{R}=\sigma$ in $K_{R-1}$.
Finally, let $T\in (0,\infty]$ and $u_R\in C^{2;1}\big(B_{R^4}(0)\times(0,T)\big)\cap C^0\big(\overline{B_{R^4}(0)}\times [0,T)\big)$ be a solution of the initial boundary value problem
	\begin{equation} \label{eq mibvp}
	\begin{cases}
	\partial_t u_{R}(x,t)=g^{ij}(\sigma_{R},\nabla u_{R}) \,  \prescript{\sigma_R}{}\nabla^2_{ij} u_{R}(x,t) & \text{for} \;\; (x,t) \in B_{R^4}(0)\times (0,T) \,,\\
	u_{R}(x,0) = u_{0,{R}}(x) & \text{for} \;\; x\in B_{R^4}(0) \,,\\
	u_{R}(x,t) = 0 & \text{for} \;\; (x,t) \in \partial B_{R^4}(0)\times [0,T) \,.
	\end{cases}
	\end{equation}
	Then we obtain
	\[ 
		|\n u_R|_{{\sigma}_R}(x,t) \lesssim R^{-3n+\frac92} 
	\]
for all $|x| = R^4$ and $t \in [0,T)$.
\end{proposition}
\begin{proof}
	Let $b_{R}$ be a rotationally symmetric supersolution constructed in Section \ref{BarrierConstruction}.
	By Corollary \ref{cor barrier at infinity}, it can be chosen such that $b_{R}|_{\partial K_R}\geq \sup |u_0|+1$. Let $a_R$ be the value of $b_{R}$ at $\partial K_{R^4}$. Then $a_R\simeq R^{-3n+\frac{17}{2}}<1$ by Lemma \ref{barrierlemma} for $R\gg 1$. Let $\tilde{b}_R=b_{R}-a_R$. Then $\tilde{b}_R$ has the following properties:
	\begin{align*}
	\tilde{b}_R(x)\geq \sup_M |u_0| \;\; \text{for} \;\; |x|=R\,, \qquad \tilde{b}_R(x)=0 \;\; \text{for} \;\; |x|=R^4\,,
	\end{align*}
	where $x\in M$ is identified with a vector in $\R^n$ via the asymptotic chart. If $u_R$ is a solution of the Dirichlet problem of the proposition, we obviously have $|u_R(x,t)|\leq \sup_M |u_0|$ for all times. Therefore,
	\begin{align*}
    u_{R}(x,t)\leq \tilde{b}_{R}(x) \quad \text{for}\;\;|x|=R \;\; \text{and} \;\; |x|=R^4 \,, \qquad t\in [0,T) \,.
	\end{align*}
	By construction, $0=|u_{0,R}|\leq \tilde{b}_{R}$ on $K_{R^4}\setminus K_R$. By the maximum principle and because $b_{R}$ is a static supersolution, we obtain
		\[ 
	-\wt{b}_R \le u_R \le \wt{b}_R 
	\]
	on $\bigl(K_{R^4}\setminus K_R\bigr)\times [0,T)$ and we have equality on $\partial K_{R^4}\times[0,T)$.
	 This implies in particular
		\[ 
	\frac{\p (-\wt{b}_R)}{\p \nu}(x) \le \frac{\p u_R}{\p \nu}(x,t) \le \frac{\p \wt{b}_R}{\p \nu}(x) \quad \text{for} \quad x\in \partial K_{R^4}\,, \quad t\in (0,T) \,.
	\]
		Since by \eqref{eq asymptotics Db} we have
	\begin{align*}
	|\nabla b(x)| & \simeq \left(\frac{|x|}{R}\right)^{-n+\frac32} \,, 
	\end{align*}
	and since $|\nabla b|=|\nabla{\tilde{b}}|$, and because the derivatives tangential to the boundary vanish, we finally get
	\[ 
	|\n u_R|_{{\sigma}_R}(x,t) \lesssim R^{-3n+\frac92} \,. 
	\]
	for all $|x| = R^4$ and $t \in [0,T)$.
\end{proof}

\begin{remark}
	In the cases $n=1,2$, we also get bounds on the derivative at $|x|=R^4$. The fact that in these cases $b$ is unbounded as $|x|\to\infty$ does not play a role in the proof of the above lemma.
\end{remark}

%%%%%%%%%%%%%%%%%%%%%%%%%%%%%%%%%%%
% DERIVATIVE ESTIMATES
%%%%%%%%%%%%%%%%%%%%%%%%%%%%%%%%%%%
\section{Derivative estimates}\label{destimates}

In this section, we are going to prove uniform derivative estimates for solutions of the modified initial boundary value problem \eqref{eq mibvp} for the mean curvature flow. These estimates are crucial as they directly yield long-time existence of the solutions. The key point is to compute an evolution inequality for a good quantity and to apply the maximum principle. Similar ideas were used by Gerhardt \cite{Ger00}.

Let $u$ evolve under mean curvature flow \eqref{eq:gMCF}.
Recall from \cite{EH91b}*{Proposition 3.2}, 
that the evolution of the quantity $v$ introduced in \eqref{definition_v} under the mean curvature flow is given by
\begin{equation}
	\label{eq:evoleqv}
	\left( \frac{d}{dt} - \Delta \right)v = -v \bigl( |\A|^2 + \Ric_{h}(\nu,\nu) \bigr) \,,
\end{equation}
where $\Ric_{h}$ is the Ricci tensor of the metric $h=\sigma_{ij} \d x^i \d x^j-(\d x^0)^2$, $A$ is the second fundamental form and $\nu$ is the unit normal of the hypersurface as defined in \eqref{definition_v}.
In this section, $\langle.,.\rangle$, $|.|$, $\nabla$ and $\Delta$ are taken with respect to the
induced metric $\gind$ on the graph of $u$ unless otherwise specified.

\begin{lemma}
	\label{lem:RicEst}
	Let $(M,\sigma)$ be a Riemannian manifold of bounded geometry and $u:M\to\R$ be a smooth function with spacelike graph in $(M\times \R,h)$. Then the Ricci tensor $\Ric_h$ of the metric $h$, evaluated on the unit normal $\nu$ of $\graph(u)$, satisfies
	\[
		\bigl| \Ric_{h}(\nu,\nu) \bigr| \le C (v^2-1)
	\]
	with some $C\ge 0$.
\end{lemma}

\begin{proof}
	Using the product structure of $M\times\Rb$, we can evaluate the Ricci tensor,
	\begin{align*}
		\Ric_{h}(\nu,\nu) &= \Ric_{\sigma} \left( \frac{\prescript{\s}{}\nabla u}{\sqrt{1-|\nabla u|^2_{\gM}}}, \frac{\prescript{\s}{}\nabla u}{\sqrt{1-|\nabla u|^2_{\s}}} \right) = v^2 \Ric_{\s}\bigl(\prescript{\s}{}\nabla u, \prescript{\s}{}\nabla u\bigr) \,,
	\end{align*}
	where $\prescript{\s}{}\n$ denotes the gradient with respect to $\s$.
	From the definition of $v$ in Eq.\ \eqref{definition_v} we infer $v^2|\nabla u|^2_{\s} = v^2 - 1$. 	
	Since $M$ has bounded geometry, its Ricci tensor satisfies $|\Ric_{\s}(w,w)|\le C|w|^2_{\gM}$ for some $C\ge 0$ and all $w$.
	Thus,
	\[
		\bigl| \Ric_{h}(\nu,\nu) \bigr| \le v^2 C |\nabla u|^2_{\gM} = C(v^2-1) \,. \qedhere
	\]
\end{proof}

In the sequel, we will study a function also considered by 
Gerhardt \cite{Ger00} (for similar ideas, see also e.\,g.\ \cite{EH91b}*{Proof of Proposition 4.4} or \cite{Lam17}*{Proof of Theorem 15}).
\begin{lemma}
	\label{lem:phievol}
Let $(M,\sigma)$ be a Riemannian manifold of bounded geometry and let $u\colon M\times [0,T)\to\R$ be a solution of the mean curvature flow in $(M\times\R,h)$. 
Then, the function $\varphi$ defined by
\begin{align}\label{def varphi}
	\varphi \coloneqq v \exp\bigl( \mu \ee^{\lambda u} \bigr), \,\qquad \lambda,\mu>0,
\end{align}
 evolves according to the equation
	\begin{align*}
		\left(\frac{d}{dt} - \Delta\right)\varphi &= - \varphi \Bigl( |\A|^2 + \Ric_{h}(\nu,\nu) \Bigr) - 2\lambda \mu \frac{\varphi}{v} \ee^{ \lambda u}\< \nabla u, \nabla v\> \\
			& \qquad - \mu \lambda^2 \varphi \ee^{\lambda u} \Bigl\{ \mu \ee^{\lambda u} + 1 \Bigr\} |\nabla u|^2 \,.
	\end{align*}
\end{lemma}

\begin{proof}
	For the first derivative of $\varphi$, 
	we calculate
	\begin{equation}
		\label{eq:phiDeriv}
		\varphi_j = v_j \exp\bigl( \mu \ee^{\lambda u} \bigr) + \mu \lambda \varphi \ee^{\lambda u} u_j \,.
	\end{equation}
	Further, for the second derivative we get
	\begin{align*}
		\nabla^2_{ij}\varphi &= \bigl( \nabla_{ij}^2 v\bigr) \exp\bigl( \mu \ee^{\lambda u} \bigr) + \mu \lambda \exp\bigl( \mu \ee^{\lambda u} \bigr) \ee^{\lambda u} u_i v_j \\
			& \quad + \mu \lambda \bigl( v_i \exp(\mu \ee^{\lambda u} ) + \mu \lambda \varphi \ee^{\lambda u} u_i \bigr) \ee^{\lambda u} u_j \\
			& \quad + \mu \lambda^2 \varphi \ee^{\lambda u} u_i u_j + \mu \lambda \varphi \ee^{\lambda u} \nabla^2_{ij} u \,.
	\end{align*}
	Taking the trace with respect to $\gind$, we obtain
	\begin{align*}
		\Delta \varphi &= \frac{\varphi}{v} \Delta v + 2 \lambda \mu \exp\bigl( \mu \ee^{\lambda u} \bigr) \ee^{\lambda u} \<\nabla u, \nabla v\> \\
			& \qquad + \mu^2\lambda^2 \varphi \ee^{2\lambda u} |\nabla u|^2 + \mu \lambda^2 \varphi \ee^{\lambda u} |\nabla u|^2 + \mu \lambda \varphi \ee^{\lambda u} \Delta u \,.
	\end{align*}
	Since the time derivative of $\varphi$ is given by
	$\partial_t \varphi = (\partial_t v) \frac{\varphi}{v} + \mu\lambda \varphi \ee^{\lambda u} \partial_t u$,
    	Lemma \ref{lem:evu} implies that $\varphi$ evolves according to
	\begin{align*}
		\left(\frac{d}{dt} - \Delta\right)\varphi &= \frac{\varphi}{v}\left(\frac{d}{dt} - \Delta \right)v + \mu\lambda \varphi \ee^{\lambda u}\underbrace{\left( \frac{d}{dt} - \Delta\right) u}_{=0} - 2\lambda \mu \frac{\varphi}{v} \ee^{\lambda u}\< \nabla u, \nabla v\> \\
			& \qquad - \mu \lambda^2 \varphi \ee^{\lambda u} \Bigl\{ \mu \ee^{\lambda u} + 1 \Bigr\} |\nabla u|^2 \,.
	\end{align*}	
	The claim now follows from the evolution equation \eqref{eq:evoleqv} for $v$.
\end{proof}

\begin{lemma} \label{mixed_term_est}
Let $(M,\sigma)$ be a Riemannian manifold of bounded geometry and let $u\colon M\to\R$
be strictly spacelike. Then, the estimate
	\[
		\bigl| \langle \nabla u, \nabla v\rangle \bigr| \le |A| |\nabla u|^2
	\]
	holds.
\end{lemma}

\begin{proof}
	Using the relation $v^2=1+|\nabla u|^2$ from Lemma \ref{uniform_metrics}, we calculate
	\[
		\partial_i (v^2) = 2v v_i = 2 g^{jk} (\nabla^2_{ij} u) u_k \,.
	\]
	Consequently,
	\[
		\bigl| \<\nabla u, \nabla v\> \bigr| = \frac{1}{v} \bigl| g^{jk} g^{il} (\nabla_{ij}^2 u) u_k u_l \bigr| \le \frac{1}{v} |\nabla^2 u| |\nabla u|^2 \,.
	\]
	Let $\pi_{\Rb}\colon M\times\Rb\to\Rb$ denote the projection onto the second factor.
	Then the claim follows from 
	\[
		|\nabla^2 u| = |\d\piR(\nabla^2 F)| = |\d\piR(A)| = |A| v \,. \qedhere
	\]
\end{proof}

\begin{lemma}\label{lem_derivative_estimate}
	Let $u_R$ be a solution of the modified initial boundary value problem \eqref{eq mibvp}.
	Then $\sup_{M\times\{t\}}v\le c$ (where $v$ is computed as in \eqref{definition_v} using $u_R$ instead of $u$)
	 for some finite constant $c\ge 1$ which depends on $\sigma$ and the initial data, but 
	is independent of $R\gg 1$.
\end{lemma}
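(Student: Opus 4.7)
The strategy is to apply the parabolic maximum principle to the auxiliary function $\varphi = v \exp(\mu \ee^{\lambda u_R})$ from \eqref{def varphi}, with $\lambda,\mu > 0$ to be chosen. By Lemma \ref{lem:phievol} the evolution equation contains a destructive term $-\varphi |A|^2$, a cross-term $-2\lambda\mu(\varphi/v)\ee^{\lambda u}\langle \nabla u,\nabla v\rangle$, and favourable negative quadratic terms in $|\nabla u|^2$. First I would estimate the cross-term using Lemma \ref{mixed_term_est} and Cauchy--Schwarz with weight $1$ to obtain
\begin{equation*}
	2\lambda\mu\tfrac{\varphi}{v}\ee^{\lambda u}|\langle \nabla u,\nabla v\rangle|
	\le \varphi|A|^2 + \lambda^2\mu^2\varphi\ee^{2\lambda u}\tfrac{|\nabla u|^4}{v^2},
\end{equation*}
so that the $\varphi|A|^2$ contribution exactly cancels. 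Using $|\nabla u|^2 = v^2 - 1$ and $|\nabla u|^2/v^2 = 1 - 1/v^2$, the two surviving quadratic contributions combine to $-\mu^2\lambda^2\varphi \ee^{2\lambda u}(v^2-1)/v^2 - \mu\lambda^2 \varphi \ee^{\lambda u}(v^2-1)$.

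Next I would control the Ricci term via Lemma \ref{lem:RicEst}, which gives $-\varphi \Ric_h(\nu,\nu) \le C\varphi(v^2-1)$. The constant $C$ is uniform in $R$ because the interpolated metric $\sigma_R$ (modified only on the collar $K_R \setminus K_{R-1}$ of fixed width $1$) has geometry bounded uniformly in $R$, by the remark following Proposition \ref{interpolation_thm}. Altogether this yields an estimate of the form
\begin{equation*}
	\Bigl(\tfrac{d}{dt} - \Delta\Bigr)\varphi \;\le\; \varphi(v^2-1)\Bigl[\, C \;-\; \mu\lambda^2 \ee^{\lambda u} \;-\; \mu^2\lambda^2 \ee^{2\lambda u}/v^2\,\Bigr].
\end{equation*}
Since $\sup_M |u_R| \le \sup_M|u_0|$ by the ordinary maximum principle for $u_R$, the factor $\ee^{\lambda u_R}$ is bounded below by a positive constant depending only on $\lambda$ and $u_0$. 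Fixing $\lambda = 1$ and choosing $\mu$ large enough (depending on $C$ and $\sup|u_0|$), we arrange $\mu\lambda^2 \ee^{\lambda u_R} \ge C$ pointwise, so the bracket is nonpositive and $\varphi$ becomes a subsolution of the heat equation on $\overline{B_{R^2}(0)}\times[0,T)$.

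The parabolic maximum principle on the cylinder $\overline{B_{R^2}(0)}\times[0,T)$ then gives $\sup \varphi \le \max\{\sup_M \varphi(\cdot,0),\ \sup_{\partial B_{R^2}\times[0,T)} \varphi\}$. The initial contribution is bounded uniformly in $R$ because $u_{0,R}$ is still uniformly spacelike with the same $\varepsilon$ (by Proposition \ref{interpolation_thm}) and $\sup|u_{0,R}| \le \sup|u_0|$. The boundary contribution is bounded uniformly in $R$ thanks to Proposition \ref{boundary_gradient_thm}: we have $|\nabla u_R|_{\sigma_R} \lesssim R^{-n+\frac32} \to 0$ and $u_R \equiv 0$ on $\partial B_{R^2}(0)$, so $v \to 1$ and $\varphi \to \ee^{\mu}$ uniformly along the boundary as $R \to \infty$. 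Finally, since $\exp(\mu \ee^{\lambda u_R})$ is bounded below uniformly by $\exp(\mu \ee^{-\lambda \sup|u_0|})$, the bound on $\varphi$ translates into the desired bound $v \le c$, with $c$ depending only on $\sigma$ and the initial data.

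The main obstacle is to orchestrate the algebraic cancellation of $|A|^2$ via Cauchy--Schwarz simultaneously with the absorption of the Ricci term, while ensuring that every constant entering the final argument is independent of $R$. The uniformity requires all three ingredients in concert: the fixed-width interpolation (giving uniform bounded geometry for $\sigma_R$), the $R$-independent sup bound for $u_R$, and the decay of the boundary gradient from Section \ref{bgestimates}.
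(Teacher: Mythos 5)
Your proof is correct and follows essentially the same route as the paper: the evolution equation of Lemma \ref{lem:phievol}, cancellation of the $|A|^2$ term via Lemma \ref{mixed_term_est} and Cauchy--Schwarz, absorption of the Ricci term by the good gradient terms, and the maximum principle with Proposition \ref{boundary_gradient_thm} controlling the boundary contribution. The only (immaterial) difference is bookkeeping: the paper uses a pointwise weight $\varepsilon=\ee^{-\lambda u_R}$ in the Cauchy--Schwarz step and fixes $\lambda=C$, $\mu=1/C$ after shifting so that $\min u_0\ge 0$, whereas you use weight $1$, keep the sharper identity $|\nabla u|^4/v^2=(v^2-1)(1-1/v^2)$, and take $\mu$ large; both choices render the coefficient of $(v^2-1)$ nonpositive.
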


\begin{proof}
	Note that as long as the solution exists, we have 
    	\[
    		\min u_0\leq \min u_{0,R}\leq u_{t,R}\leq\max u_{0,R} \leq\max u_0
	\]
	by the avoidance principle and the construction of $u_{0,R}$. After possibly applying a 
	shift to $u_0$, we further may assume
	\[
		\min u_0 \ge 0 \,.
	\]
	
	Using Lemma \ref{mixed_term_est}
	and $|\nabla u_R|^2\le v^2$ (see Lemma \ref{uniform_metrics}), we obtain the estimate
	\[
		2 \<\nabla u_R, \nabla v\> \le \varepsilon |A|^2 v + \frac{|\nabla u_R|^4}{\varepsilon v} \le v \left( \varepsilon |A|^2 + \frac{|\nabla u_R|^2}{\varepsilon} \right)
	\]
	for any $\varepsilon>0$.
	Together with the Lemmas \ref{lem:RicEst} and \ref{lem:phievol}, this implies
	\begin{align*}
		\left(\frac{d}{dt} - \Delta\right) \varphi &\le -\varphi \left( \left( 1 - \varepsilon \lambda \mu \ee^{\lambda u_R} \right) |A|^2 - C(v^2-1) \right) \\
			& \qquad - \lambda \mu \varphi \ee^{\lambda u_R} \left( \lambda\bigl(\mu \ee^{\lambda u_R}+1\bigr) - \frac{1}{\varepsilon} \right) |\nabla u_R|^2 \,.
	\end{align*}
	Now let $\varepsilon\coloneqq\ee^{-\lambda u_R}$, $\lambda\coloneqq C$ and $\mu\coloneqq \frac{1}{C}$. Then
	$\varepsilon\lambda\mu\ee^{\lambda u_R} = 1$, and using $|\nabla u_R|^2=v^2-1$ we calculate
	\begin{align*}
		\left(\frac{d}{dt} - \Delta\right)\varphi &\le \varphi C(v^2-1) - \varphi \ee^{Cu_R} \bigl( \ee^{Cu_R} + C - \ee^{C u_R} \bigr)(v^2-1) \\
			&= -\varphi C\bigl( \ee^{Cu_R} - 1 \bigr) (v^2-1) \stackrel{\min u_R \ge 0}\le 0 \,.
	\end{align*}
	By the gradient estimates in Proposition \ref{boundary_gradient_thm}, the function $\varphi$ is bounded at $\partial B_{R^4}(0)$ for positive times. We can thus apply the maximum principle to conclude that
	 \[
		 \sup_{B_{R^4}(0)\times[0,T)}\varphi\le \max\left\{\sup_{B_{R^4}(0)\times\left\{0\right\}}\varphi,\sup_{\partial B_{R^4}(0)\times[0,T)}\varphi\right\} \,,
	 \]
	 which yields the statement of the lemma.
\end{proof}

\begin{lemma} \label{lem no lift-off for constructed_2nd_version}
	Let $R>0$ be large enough, $T\in (0,\infty]$ and let $(M^n,\sigma)$, $n\geq3$, be an asymptotically flat manifold.
	Suppose $u_R\in C^{2;1}\big(K_R\times(0,T)\big)\cap C^0\big(\overline{K_R}\times [0,T)\big)$ is a solution of the modified initial boundary value problem
	\begin{equation*}
		\begin{cases}
			\partial_t u_R(x,t)=g^{ij}(\sigma_R,\nabla u_R) \, \prescript{\s}{}\nabla^2_{ij} u_R(x,t) & \text{for} \;\; (x,t) \in K_R\times (0,T) \,,\\
			u_R(x,0) = u_{0,R}(x) & \text{for} \;\; x\in K_R \,,\\
			u_R(x,t) = 0 & \text{for} \;\; (x,t) \in \partial K_R\times [0,T] \,,
		\end{cases}
	\end{equation*}
	where $u_{0,R}$ and $\sigma_R$ are the modified initial data provided by Proposition \ref{interpolation_thm}.
	
	We extend $u_{0,R}$ 
	and $u_R$ by zero for $|x|>R$.
	We additionally assume that we are given $\eps>0$ and $r_1>0$ such that $|u_0(x)|\le\eps$ for $|x| \ge r_1$.
	
	Then there is a function $b_\eps \colon M\to \R_+$, dependent only on  $\sigma$, $\eps$, $r_1$, and $\sup_{M} u_0$ and a small constant $\kappa>0$  dependent only on  $\sigma$, $r_1$, $\sup_{M}\bigl( |u_0|+|\nabla u_0|_{\sigma}\bigr)$, such that for all large $R$
	\begin{enumerate}[leftmargin=1cm,label=(\roman*)]
		\item $|u_R(x,t)| \le b_\eps(x)$ for $(x,t)\in M\times (0,T)$,
		\item $b_\eps(x)\to \eps \text{ for } |x| \to \infty$,
		\item $|\nabla u_R(x,t)|_{\sigma} \le 1-\kappa$ for $(x,t)\in M\times (0,T)$.
	\end{enumerate}
Consequently, each solution $u_R$ can be extended for all times and satisfies the estimates (i) and (iii) uniformly for all $t\in (0,\infty)$.
\end{lemma}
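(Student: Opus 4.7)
The plan is to assemble three already-established ingredients and then invoke standard parabolic continuation. Parts (i) and (ii) are a direct application of Lemma \ref{lem no lift-off for constructed} via Remark \ref{rem no lift-off_AEcase} to $(M,\sigma_R)$. Concretely, with $h \coloneqq \sup_M |u_0|$ fixed, one chooses $r_0 \ge r_1$ as in Corollary \ref{cor barrier at infinity} with barrier height $h$ and sets $b_\varepsilon \coloneqq b + \varepsilon$ on $M \setminus K_{r_0}$, extended by the constant $h + \varepsilon$ on $K_{r_0}$. Since $\sigma_R = \sigma$ on $K_{R-1}$ and the modification in the collar is uniformly controlled in $R$ by Proposition \ref{interpolation_thm}, the asymptotic flatness constants entering the barrier construction may be taken independent of $R$, so $b_\varepsilon$ depends only on $\sigma$, $\varepsilon$, $r_1$, and $\sup_M |u_0|$. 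The comparison on $K_R\setminus K_{r_0}$ against the parabolic-boundary data $|u_R| \le h \le b_\varepsilon$ on $\partial K_{r_0}$, $u_R = 0 \le b_\varepsilon$ on $\partial K_R$, and $|u_{0,R}| \le |u_0| \le \varepsilon \le b_\varepsilon$ initially, combined with the symmetric lower barrier (Remark \ref{rem barrier at infinity}(ii)), yields (i) and (ii).

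Part (iii) is essentially the content of Lemma \ref{lem_derivative_estimate}: that lemma gives $\sup v \le c$ with $c \ge 1$ independent of $R$, i.\,e.\ $|\nabla u_R|_{\sigma_R}^2 \le 1-c^{-2}$. Since the interpolation of Proposition \ref{interpolation_thm} renders $\sigma_R$ uniformly equivalent to $\sigma$ (with constants controlled by $\sup_M(|u_0|+|\nabla u_0|_\sigma)$ through the choice of $\lambda$), this converts into $|\nabla u_R|_\sigma \le 1-\kappa$ with the claimed dependence of $\kappa$.

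For the long-time extension, estimate (iii) renders equation \eqref{eq mibvp} uniformly parabolic with ellipticity constants independent of $t$, while Proposition \ref{boundary_gradient_thm} supplies a uniform boundary gradient bound. Standard Krylov--Safonov and Schauder theory then produce uniform interior and boundary $C^{k,\alpha}$-estimates, from which a classical contradiction argument extends $u_R$ past any finite time, so $T$ may be taken to be $\infty$; estimates (i) and (iii) persist on $(0,\infty)$ by construction, since their derivations were time-independent.

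The main obstacle is the $R$-uniformity of every constant appearing in the conclusion. Once one observes that the interpolated metric $\sigma_R$ inherits the asymptotic flatness bounds of $\sigma$ with uniform constants, that the barrier of Section \ref{BarrierConstruction} depends only on these bounds, and that the proof of Lemma \ref{lem_derivative_estimate} uses only quantities that are controlled uniformly in $R$ at time zero and on the parabolic boundary via Proposition \ref{boundary_gradient_thm}, the rest reduces to bookkeeping.
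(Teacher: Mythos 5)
Your proposal is correct and follows essentially the same route as the paper: (i) and (ii) come from Lemma \ref{lem no lift-off for constructed} via Remark \ref{rem no lift-off_AEcase}, (iii) comes from Lemma \ref{lem_derivative_estimate}, and long-time extension follows from the resulting uniform parabolicity, standard interior/boundary regularity, and a continuation argument at a hypothetical maximal time. Your added remarks on the $R$-uniformity of the barrier and on comparing $\sigma_R$ with $\sigma$ are more explicit than the paper's proof but do not change the argument.
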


\begin{proof}Choose $b_{\varepsilon}$ as in the proof of Lemma \ref{lem no lift-off for constructed}.
The estimates (i) and (ii) hold due to Remark \ref{rem no lift-off_AEcase}. Part (iii) follows directly from Lemma \ref{lem_derivative_estimate}. Note that all these estimates are independent of $T$. Suppose that $T_{\Max}<\infty$ is the maximal time of existence. By standard theory of parabolic equations, we also have  bounds on all derivatives of $u_R$ on $(T_{\Max}/2,T_{\Max})$ which are uniform up to $T_{\Max}$. Therefore $u_{t,R}$ converges in all derivatives to some function  $u_{T_{\Max},R}$ and we can apply short-time existence to get a smooth extension of $u_R$ beyond $T_{\Max}$ which contradicts the maximality of $T_{\Max}$.
\end{proof}

\begin{remark} \label{rem no lift-off_2nd_version}
	Suppose $r_1$ is given as a function of $\eps$ in the sense that for all $\eps>0$ $|u_0(x)|<\eps$ for $|x|\ge r_1(\eps)$.
	Here we may take the infimum $b(x)\coloneqq \inf_{\eps>0}b_{\eps}(x)$ which depends on $\sigma$, $\sup_{M} u_0$, and the function $r_1$. With $\kappa>0$ as above and $R\gg 1$ large enough, we then have
	\begin{enumerate}[leftmargin=1cm,label=(\roman*)]
		\item $|u_R(x,t)| \le b(x)$ for $(x,t)\in M\times (0,T)$, 
		\item $b(x)\to 0 \text{ for } |x| \to \infty$
			\item $|\nabla u_R(x,t)|_{\sigma_R} \le 1-\kappa$ for $(x,t)\in M\times (0,T)$.
	\end{enumerate}
\end{remark}

\section{Long-time existence and convergence}\label{mainresults}

In this section, we are going to prove the main statements of the paper.

\begin{theorem}\label{thm_longtime_existence}
	Let $(M^n,\sigma)$ be an asymptotically flat manifold with $n\geq3$ and $u_0\in C^{0,1}(M)$ be uniformly spacelike such that
	\[
		u_0(x)\to 0 \quad \text{for} \quad |x|\to\infty.
	\]
	Then there exists a unique uniformly spacelike solution of \eqref{eq MCF} with $u(\cdot,0)=u_0$ which exists for all times and satisfies $u\in C^{2;1}\bigl(M\times (0,\infty) \bigr)\cap C^0\bigl(M\times [0,\infty)\bigr)$.
	Moreover, $u$ satisfies
	\begin{equation}\label{decay_thm_longtime_existence}
		u(x,t)\to 0 \quad \text{for} \quad |x|\to \infty.
	\end{equation}
	and this convergence is uniform in $t$. More precisely, there is a function $b\colon M\to \mathbb{R}$ and a constant $\kappa>0$, both only dependent on $u_0$ and $\sigma$, such that
	\begin{enumerate}[leftmargin=1cm,label=(\roman*)]
		\item $|u(x,t)|\le b(x)$ for $(x,t)\in M\times (0,\infty)$,
		\item $b(x)\to 0$ for $|x|\to \infty$,
		\item $|\nabla u(x,t)|_{\sigma} \le 1-\kappa$ for $(x,t)\in M\times (0,\infty)$.
	\end{enumerate}
\end{theorem}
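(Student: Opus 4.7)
The plan is to realise $u$ as the limit of solutions $u_R$ to the modified compact initial-boundary value problems from Lemma \ref{lem no lift-off for constructed_2nd_version}. For each $R\gg 1$, Proposition \ref{interpolation_thm} produces modified data $(u_{0,R},\sigma_R)$ with $(\sigma_R,u_{0,R})=(\sigma,u_0)$ on $K_{R-1}$ and $(\delta,0)$ outside $K_R$; if $u_0$ is only Lipschitz, I would first mollify to smooth initial data with Lipschitz constant still bounded by $1-\varepsilon/2$. Standard quasilinear parabolic theory on the compact manifold with boundary $\overline{K_R}$ yields a smooth short-time solution $u_R$ of the corresponding homogeneous Dirichlet problem, and Lemma \ref{lem no lift-off for constructed_2nd_version} immediately extends it to all times, simultaneously giving the uniform (in $R$) bounds $|u_R(x,t)|\le b(x)$ with $b(x)\to 0$ as $|x|\to\infty$ and $|\nabla u_R(x,t)|_{\sigma_R}\le 1-\kappa$.

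Since $\sigma_R=\sigma$ on $K_{R-1}$ and the gradient bound makes \eqref{eq MCF} uniformly parabolic with smooth coefficients on any compact subset of $M\times(0,\infty)$, interior parabolic Schauder estimates produce uniform bounds on $u_R$ in every $C^{k,\alpha;k/2,\alpha/2}(\Omega\times[t_1,t_2])$ with $\Omega\Subset M$ and $0<t_1<t_2<\infty$, independently of $R$ for $R$ large enough that $\Omega\subset K_{R-1}$. A diagonal Arzel\`a--Ascoli argument over an exhaustion of $M\times(0,\infty)$ extracts a subsequence converging in $C^{2;1}_{\mathrm{loc}}$ to a limit $u$, which solves \eqref{eq MCF}. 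The uniform estimates descend to $u$, yielding assertions (i), (ii), (iii) of the theorem.

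Continuity of $u$ at $t=0$ is obtained by a local barrier argument: at any point $x_0\in M$, slightly tilted spacelike planar graphs touching $u_0$ from above and below at $x_0$ provide super/subsolutions of \eqref{eq MCF} whose boundary gradients exceed those of the approximations; Proposition \ref{gradient_max_principle} then sandwiches each $u_R$ in a neighbourhood of $x_0$, uniformly in $R$ and uniformly with respect to the mollification, yielding $|u_R(\cdot,t)-u_0|\le\eta(t)$ locally near $x_0$ with $\eta(t)\to 0$ as $t\to 0$. Passing $R\to\infty$ gives $u\in C^0(M\times[0,\infty))$ with $u(\cdot,0)=u_0$. For uniqueness, suppose $u^{(1)},u^{(2)}$ are two uniformly spacelike solutions with the same initial data. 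Theorem \ref{thm no lift-off} provides a common decay at spatial infinity uniform in $t$ on bounded time intervals, so on a sufficiently large exhausting $K_R$ the difference $u^{(1)}-u^{(2)}$ is arbitrarily small on the parabolic boundary; linearising the quasilinear operator along the segment $\{(1-s)u^{(1)}+s u^{(2)}\}_{s\in[0,1]}$ one obtains a linear uniformly parabolic equation, and Proposition \ref{gradient_max_principle} (or the standard linear maximum principle) forces $u^{(1)}\equiv u^{(2)}$ upon letting $R\to\infty$.

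I expect the \textbf{main obstacle} to be the careful attainment of the initial data in $C^0$: the barriers near $t=0$ must be robust under both the mollification of the Lipschitz initial data and the metric interpolation of Section \ref{Interpolation}, so that their constants neither depend on $R$ nor degenerate as the mollification parameter tends to zero. The global-in-time interior behaviour of each approximation is already governed by Lemma \ref{lem no lift-off for constructed_2nd_version}, and uniqueness is a fairly routine application of the comparison principle once Theorem \ref{thm no lift-off} is available; the delicate point is thus concentrated at the initial time and in making the interior estimates pass simultaneously through the double limit in the mollification parameter and in $R$.
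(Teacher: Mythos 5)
Your proposal follows essentially the same route as the paper: approximate by the modified Dirichlet problems of Lemma \ref{lem no lift-off for constructed_2nd_version}, use the resulting $R$-independent $C^0$ and gradient bounds together with interior parabolic estimates and Arzel\`a--Ascoli to extract a limit solution inheriting (i)--(iii), and prove uniqueness by combining the spatial decay from Theorem \ref{thm no lift-off} with a comparison principle on a compact exhaustion. The only differences are in bookkeeping — you spell out the mollification, the attainment of the initial data via local barriers, and a linearised version of the uniqueness comparison (the paper instead compares $v$ directly with $u\pm\varepsilon$) — all of which are consistent refinements of the paper's argument.
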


\begin{proof}By Lemma \ref{lem no lift-off for constructed_2nd_version}, the solutions $u_R$ exist for all times
	and are uniformly spacelike in $T$ and $R$.
	By standard theory for parabolic equations, we get uniform bounds on higher derivatives on
	 $(\delta,\infty)$ where the bounds only depend on $u_0$, $\sigma$ and $\delta$ and are in particular independent of $R$. Therefore, by the Arzelà-Ascoli theorem, the family $u_R$ subconverges in all derivatives to a solution $u$ of \eqref{eq MCF} with initial data $u_0$. The estimates (i)--(iii) are consequences of the estimates (i)--(iii) in Lemma \ref{lem no lift-off for constructed_2nd_version} and Remark \ref{rem no lift-off_2nd_version}.
	
	To show uniqueness, one uses \eqref{decay_thm_longtime_existence} in the following sense: Suppose that $v(x,t)$ is another uniformly spacelike solution of \eqref{eq MCF} which is defined up to a time $T$. Then by Theorem \ref{thm no lift-off}, $v$ also satisfies \eqref{decay_thm_longtime_existence} for each $t\in[0,T)$. Therefore, $u\pm \varepsilon$  can be used as a barrier for $v$ on $M\times [0,T)$. The maximum principle implies that for each $\varepsilon>0$, we get $u-\varepsilon\leq v\leq u+\varepsilon$. Uniqueness follows from $\varepsilon\to0$.
\end{proof}

\begin{remark}\label{rem_longtime_existence}
	Statement \eqref{decay_thm_longtime_existence} also follows in dimensions $n=1,2$ but we cannot conclude (i) and (ii) in these cases. These properties are however essential in the proof of the main theorem below. In the case $n=1$, we give a separate proof that is done in a subsection below.
\end{remark}

\begin{theorem}\label{thm_longtime_convergence}
	Let $u$ be the solution in Theorem \ref{thm_longtime_existence}. Then $u\left(\cdot,t\right)\xrightarrow[t\to\infty]{}0$
	 uniformly in $C^{l}$ for all $l\in\mathbb{N}_{0}$. More precisely,
	for each $l\in\N_0$, there exists a function $v_l\colon [0,T)\to \R$ with $v_l(t)\to 0$ as $t\to\infty$ such that
	\[
		\bigl\| u_t\bigr\|_{C^{l}(M)}\leq v_l(t) \,.
	\]
\end{theorem}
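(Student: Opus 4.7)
The plan is to split the argument into two stages: first prove uniform $C^0$ convergence $u(\cdot,t)\to 0$, then bootstrap to $C^l$ convergence for every $l$ by combining uniform interior regularity with an Arzel\`a--Ascoli extraction.

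For $C^0$ convergence, I would set $M(t)\coloneqq\sup_M u(\cdot,t)$ and $m(t)\coloneqq\inf_M u(\cdot,t)$. Since the matrix $g^{ij}(\sigma,\nabla u)$ is positive definite, the standard parabolic maximum principle applied to \eqref{eq MCF} shows that $M$ is non-increasing and $m$ is non-decreasing; both are uniformly bounded, so they tend to limits $L\ge 0$ and $\ell\le 0$. To rule out $L>0$ (and symmetrically $\ell<0$), I argue by contradiction. Pick $t_n\to\infty$ and define time-shifted solutions $u_n(x,t)\coloneqq u(x,t_n+t)$. Theorem \ref{thm_longtime_existence}(iii) keeps the equation uniformly parabolic and (i) gives a uniform $L^\infty$ bound, so standard parabolic interior estimates yield uniform $C^{k,\alpha}_{\mathrm{loc}}$ bounds for every $k$. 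A diagonal subsequence then converges in $C^k_{\mathrm{loc}}(M\times\R)$ to an eternal solution $u_\infty$ of \eqref{eq MCF} still satisfying $|u_\infty(x,t)|\le b(x)$. Monotonicity of $M$ gives $\sup_M u_\infty(\cdot,t)=L$ for every $t\in\R$, and since $b(x)\to 0$ at infinity by (ii), this supremum is attained at some interior point $(x_0,0)$. Freezing the coefficients $g^{ij}(\sigma,\nabla u_\infty)$ turns \eqref{eq MCF} into a linear uniformly parabolic equation with smooth coefficients for $w\coloneqq u_\infty-L\le 0$, so the strong maximum principle, iterated along a chain of coordinate neighbourhoods to exploit connectedness of $M$, forces $u_\infty\equiv L$ on $M\times(-\infty,0]$. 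This contradicts $u_\infty(x,0)\le b(x)\to 0$, hence $L=0$; the symmetric argument gives $\ell=0$, and the $C^0$ convergence is uniform.

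For the $C^l$ convergence, the same uniform ellipticity and regularity theory provides a bound $\|u_t\|_{C^{l+1}(M)}\le C_l$ for all $t\ge 1$ with $C_l$ independent of $t$. An interpolation inequality of the form $\|u_t\|_{C^l(M)}\lesssim \|u_t\|_{C^0(M)}^{\theta}\|u_t\|_{C^{l+1}(M)}^{1-\theta}$ for some $\theta\in(0,1)$ then combines the $C^0$ decay with the uniform $C^{l+1}$ bound to give $\|u_t\|_{C^l(M)}\to 0$. Setting $v_l(t)\coloneqq \sup_{s\ge t}\|u_s\|_{C^l(M)}$ produces the required non-increasing function tending to $0$.

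The main obstacle is ensuring the compactness argument closes on the non-compact manifold $M$, and this is exactly where the spatial decay $|u|\le b$ with $b(x)\to 0$ from parts (i) and (ii) of Theorem \ref{thm_longtime_existence} is essential: it both guarantees that the limiting eternal solution $u_\infty$ attains its spatial supremum at a finite point, so the strong maximum principle is applicable, and rules out the constant limit afterwards. Without that uniform-in-$t$ decay at infinity, the sup of $u_n(\cdot,0)$ could escape to infinity in the limit and the contradiction step would collapse.
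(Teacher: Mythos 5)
Your proposal is correct and follows essentially the same route as the paper: monotonicity of the sup via the maximum principle (made legitimate on the non-compact $M$ by the uniform spatial decay $|u|\le b$), a time-shift/Arzel\`a--Ascoli compactness argument producing a limit solution whose supremum is attained at a finite point, the strong maximum principle to force that limit to be constant and hence zero, and interpolation between the $C^0$ decay and uniform higher-derivative bounds for the $C^l$ statement. The only cosmetic differences are that you work with an eternal limit solution and with $\sup u$, $\inf u$ separately, while the paper works with $\sup|u|$ and a limit defined for $t\ge 0$, and the paper states the interpolation inequality in the form $\norm{\nabla f}_{C^0}^2\le c(n)\norm{f}_{C^0}\bigl\|\nabla^2 f\bigr\|_{C^0}$, which iterated is equivalent to yours.
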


\begin{proof}
	First we prove the statement for $l=0$. Since $u$ decays as $x\to\infty$ by
	Theorem \ref{thm_longtime_existence}, we can apply the maximum principle to conclude that
	\[
		v\colon [0,T)\to\Rb\,, \qquad v(t)\coloneqq \sup_{x\in M} |u(x,t)| \,,
	\]
	is monotonically decreasing. If $v(t)\to 0$ for $t\to\infty$, the claim follows. Assume that $v\left(t\right)\xrightarrow[t\to\infty]{}\delta>0$.
	By Theorem \ref{thm_longtime_existence} there exists $R>0$ such that $\abs{u\left(x,t\right)}\leq\frac{\delta}{2}$ for all $\abs x\geq R$
	and $t\geq0$. Using the strict maximum principle once again in $\overline{B_{R}\left(0\right)}\times[0,\infty)$
	yields that either $v$ is strictly decreasing or $u\left(\cdot,t\right)$
	is constant for all $t\geq T_{0}$ for some $T_0\ge 0$. In the second case we conclude
	(again by Theorem \ref{thm_longtime_existence}) that $u\left(\cdot,t\right)=0$ for all $t\geq T_{0}$.

	Let $t_{k}\geq0$ with $t_{k}\xrightarrow[k\to\infty]{}\infty$ be
	arbitrary. Due to the uniform estimates for $u$ in $C^{l}$ for $l\in\mathbb{N}$
	it follows by the Arzel\`a-Ascoli theorem (after choosing a subsequence also labelled
	$\left(t_{k}\right)_{k\in\mathbb{N}}$) that
	\[
		u\left(\cdot,t+t_{k}\right)\xrightarrow[k\to\infty]{}\tilde{u}\left(\cdot,t\right)
	\]
	uniformly and $\tilde{u}$ is a solution of the same differential
	equation as $u$. Due to the uniform convergence we conclude that
	\begin{align}\label{max}
		\sup_{x\in M}\abs{\tilde{u}\left(x,t\right)} & =\lim_{k\to\infty}\sup_{x\in M}\abs{u\left(x,t+t_{k}\right)} =\lim_{k\to\infty}v\left(t+t_{k}\right) =\delta>0
	\end{align}
	for all $t\geq0$. Because $\abs{u\left(x,t\right)}\xrightarrow[\abs x\to\infty]{}0$ uniformly in $t$ the same holds for $\tilde{u}$.
	Therefore, $x\mapsto \tilde{u}(x,t)$ attains a maximum for each $t\geq0$. By the strict maximum principle, this is strictly decreasing unless $\tilde{u}$ is constant, hence identically zero. However, this is a contradiction to \eqref{max}.

	For arbitrary $l\in\mathbb{N}$, the statement follows immediately
	from the following interpolation inequality
	\[
		\norm{\nabla f}_{C^{0}\left(M \right)}^{2}\leq c\left(n\right)\norm f_{C^{0}\left(M\right)}\bigl\|f\bigr\|_{C^{2}\left(M\right)}
	\]
	since $\norm{u\left(\cdot,t\right)}_{C^{0}\left(M\right)}\xrightarrow[t\to\infty]{}0$
	and the uniform bounds for the derivatives of $u$ hold.
\end{proof}

\subsection{The One-Dimensional Case}
For $(M,\sigma)=(\r,\delta)$, 
the mean curvature flow can be written in a particularly simple form which allows us to prove our main result with a different method. To be more precise, for maps $u\colon \Rb\times[0,T)\to\Rb$, the graphical mean curvature flow \eqref{eq:gMCF}
may equivalently be written as
\begin{equation}
	\label{eq:1dMCF}
	\partial_t u = \frac{u''}{1-\bigl(u'\bigr)^2} = \frac{1}{2} \left[ \ln \frac{1+u'}{1-u'} \right]'\,.
\end{equation}
From Theorem \ref{thm_longtime_existence} and Remark \ref{rem_longtime_existence}, we know already that $\eqref{eq:1dMCF}$ admits uniformly spacelike solutions that exist for all times, provided that the initial data is nice enough. In the following, we will show convergence to $0$ for uniformly spacelike $u_0$ and we will even get a convergence rate if $u_0\in L^2(\R)$.

\begin{lemma}
	\label{lem:1dBounded}
	Let $u\colon \R\times (0,T)\to\infty$ be a smooth solution of \eqref{eq:1dMCF} (which is continuous up to $t=0$) with uniformly spacelike initial data $u_0\in C^{0,1}(\R)\cap L^2(\R)$. Then $u_t\in L^2(\R)$ and $\left\|u_t\right\|_{L^2}\leq \left\|u_0\right\|_{L^{2}}$.
\end{lemma}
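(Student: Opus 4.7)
The plan is to exploit the divergence form $\partial_t u = \Phi(u')'$ from \eqref{eq:1dMCF}, where $\Phi(s) := \frac{1}{2}\ln\frac{1+s}{1-s}$, together with the sign property $s\Phi(s) \ge 0$ for $s \in (-1,1)$. Formally, integration by parts gives
\begin{equation*}
\frac{d}{dt}\int u^2 \, dx = 2\int u\,\Phi(u')' \, dx = -2\int u' \Phi(u')\, dx \le 0,
\end{equation*}
which is exactly the desired monotonicity. The work lies in making this rigorous without any a priori $L^2$ control of $u(\cdot,t)$ away from $t=0$.

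I would first collect three pieces of a priori information. Since $u_0 \in C^{0,1}(\R) \cap L^2(\R)$ is Lipschitz and square-integrable, it is automatically bounded and tends to zero at infinity, so the comparison principle gives $\|u(\cdot,t)\|_\infty \le \|u_0\|_\infty =: M$ for all $t$. Next, the solution is uniformly spacelike (as recorded in the paragraph preceding the lemma), so $|\Phi(u'(x,t))| \le C_0$ for a constant depending only on the spacelikeness constant $\varepsilon$. Finally, the pointwise no-lift-off statement \eqref{eq thm no lift-off 2} is available in dimension $n=1$ via Remark \ref{rem_longtime_existence}, so for every fixed $t$ we have $\sup_{|y|\ge R}|u(\cdot,t)| \to 0$ as $R \to \infty$.

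Next I would introduce a smooth cutoff $\eta_R \colon \R \to [0,1]$ with $\eta_R \equiv 1$ on $[-R,R]$, supported in $[-2R,2R]$, and $|\eta_R'| \le C/R$. Multiplying the equation by $\eta_R u$ and integrating by parts, which is legitimate since $\eta_R$ has compact support, yields
\begin{equation*}
\frac{d}{dt}\int \eta_R u^2\, dx = -2\int \eta_R\, u'\Phi(u')\, dx - 2\int \eta_R'\, u\, \Phi(u')\, dx.
\end{equation*}
The first integral is non-negative by the sign property and can be dropped. The second is bounded in absolute value by
\begin{equation*}
2 C_0 \int |\eta_R'|\,|u|\, dx \le \frac{2 C C_0}{R} \cdot 2R \cdot \sup_{|y|\ge R}|u(\cdot,t)| = 4 C C_0 \sup_{|y|\ge R}|u(\cdot,t)|.
\end{equation*}

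Integrating in time from $0$ to $t$ and then letting $R \to \infty$ closes the argument. On the left, Fatou's lemma gives $\int u(\cdot,t)^2 dx \le \liminf_R \int \eta_R u(\cdot,t)^2 dx$, while $\int \eta_R u_0^2 dx \to \|u_0\|_{L^2}^2$ by dominated convergence. For the boundary contribution, the integrand $\sup_{|y|\ge R}|u(\cdot,s)|$ is dominated by the constant $M$ on $[0,t]$ and tends to zero pointwise in $s$ by no-lift-off, so dominated convergence makes the time integral vanish. The conclusion is $\|u(\cdot,t)\|_{L^2}^2 \le \|u_0\|_{L^2}^2$, which in particular shows $u(\cdot,t) \in L^2(\R)$. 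The main obstacle is precisely the absence of a priori $L^2$ information at later times; it is circumvented by coupling the uniform spacelikeness (which bounds $\Phi(u')$) with the pointwise spatial decay so that the cutoff boundary term disappears in the limit.
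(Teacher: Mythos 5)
Your proof is correct, but it handles the crucial cutoff error term differently from the paper. Both arguments start from the divergence form $\partial_t u = \Phi(u')'$ with $\Phi(s)=\tfrac12\ln\tfrac{1+s}{1-s}$, test against a cutoff times $u$, and integrate by parts. The paper then \emph{keeps} the good term $-\int\varphi^2 u'\Phi(u')\,\d x$, uses Young's inequality together with $|\Phi(u')|\lesssim|u'|$ (from uniform spacelikeness) to absorb part of the cutoff error into it, and controls the leftover $\tfrac{C}{\delta R^2}\int u^2$ by a Gronwall argument on the translated supremum $A(t,R)=\sup_y\int u^2\varphi_{R,y}^2\,\d x$; the factor $\exp(Ct/(\delta R^2))\to1$ as $R\to\infty$. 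This needs only that $u$ is bounded. You instead \emph{discard} the good term and kill the boundary term $-2\int\eta_R' u\,\Phi(u')\,\d x$ outright, using the uniform bound $|\Phi(u')|\le C_0$ together with the spatial decay $\sup_{|y|\ge R}|u(y,t)|\to0$ for each fixed $t$. That decay is a genuine extra input: it is the no-lift-off statement \eqref{eq thm no lift-off 2}, which in dimension $n=1$ is supplied by the remark following Theorem \ref{thm no lift-off} (the barrier construction with the light-cone solutions $\wh b_{x_0,t_0}$), not by Remark \ref{rem_longtime_existence} as you cite — the hypotheses are met since a Lipschitz $L^2$ function is bounded and decays at infinity, as you correctly observe. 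The trade-off: your route is shorter and avoids both the elementary inequality $x\Phi(x)\ge x^2$ and the Gronwall lemma, but it imports the no-lift-off machinery; the paper's route is self-contained at the level of $L^2$ energy estimates and, moreover, sets up exactly the $A(t,R)$ bookkeeping that is reused verbatim in the proof of Lemma \ref{lem:1dDerivDecay}. One shared caveat applies to both proofs: uniform spacelikeness of the solution for positive times (not just of $u_0$) is used — you through $|\Phi(u')|\le C_0$ and the hypotheses of the no-lift-off theorem, the paper through $\Phi(u')^2\le C(u')^2$ — and is justified by the discussion preceding the lemma.
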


\begin{proof}
	Choose a smooth cutoff function $\psi$ with $0\leq \psi\leq 1$,
	\[
		\psi(x) =
		\begin{cases}
			1 \,, & x\in[-1,1] \,, \\
			0 \,, & x\notin (-2,2) \,,
		\end{cases} \qquad \text{and} \qquad |\psi'|\leq 2 \,.
	\]
	Let $\varphi(x)=\psi(x/R)$ where $R>0$. Then we get
	\begin{align*}
		\partial_t\int_{\Rb}\varphi^2u^2\der x&= 2\int_{\Rb}\varphi^2(\partial_t u) u\der x=\int_{\Rb}\varphi^2u\left(\ln\frac{1+u'}{1-u'}\right)'\der x\\
			&=-2\int_{\Rb}\varphi'\varphi u\left(\ln\frac{1+u'}{1-u'}\right)\der x -\int_{\Rb}\varphi^2u'\left(\ln\frac{1+u'}{1-u'}\right)\der x\\
			&\le\frac{1}{\delta}\int_{\Rb}(\varphi')^2u^2\der x+\delta\int_{\Rb}\varphi^2\left(\ln\frac{1+u'}{1-u'}\right)^2\der x \\
				& \qquad -\int_{\Rb}\varphi^2u'\left(\ln\frac{1+u'}{1-u'}\right)\der x ,
	\end{align*}
	where $\delta>0$ is fixed below.
	Also using $x\ln \frac{1+x}{1-x} \ge x^2$ for $|x|<1$, we further estimate
	\begin{align*}
			\partial_t\int_{\Rb}\varphi^2u^2\der x&\leq\frac{4}{\delta R^2}\int_{-2R}^{2R}u^2\der x+ C \delta\int_{\Rb}\varphi^2(u')^2\der x -\int_{\Rb}\varphi^2(u')^2\der x\\ &\leq \frac{4}{\delta R^2}\int_{-2R}^{2R}u^2\der x \,,
	\end{align*}
	where $C$ depends on $\sup |u'_t|\leq1-\varepsilon$ and $\delta\in (0,C^{-1})$.
	Now, define
	\begin{align*}
		A(t,R)=\sup_{y\in\Rb}\int_{\Rb}u^2\varphi_{R,y}^2\der x \,,
	\end{align*}
	where $\varphi_{R,y}(x)=\psi((x-y)/R)$. Note that $A(t,R)$ is bounded because $u$ is. After integration in time, we get
	\begin{align*}
		\int_{\Rb}\varphi_{R,y}^2u_t^2\der x&\leq \int_{\Rb}\varphi_{R,y}^2u_0^2\der x+\frac{4}{\delta R^2}\int_0^t \int_{-2R}^{2R}u_s^2\der x \der s \\
		&\leq A(0,R) +\frac{8}{\delta R^2}\int_0^tA(s,R)\der s
	\end{align*}
	and taking the supremum over $x$ on the left-hand side yields
	\begin{align*}
		A(t,R)\leq A(0,R)+\frac{8}{\delta R^2}\int_0^tA(s,R)\der s
	\end{align*}
	and by the Gronwall inequality, we obtain 
	\begin{align*}
		A(t,R)\leq A(0,R)\exp\left(\frac{8t}{\delta R^2}\right)
	\end{align*}
	and the result follows by letting $R\to\infty$.
\end{proof}

Now we extend the cutoff argument to get a uniform estimate of the $H^1$-norm.
\begin{lemma}
	\label{lem:1dDerivDecay}
	Let $u\colon \R\times (0,T)\to\infty$ be a smooth solution of \eqref{eq:1dMCF} with uniformly spacelike initial data $u_0\in C^{0,1}(\R)\cap L^2(\R)$. Then  $u_t\in H^1(\Rb)$ and
	\[
		\left\|u_t\right\|_{L^2}^2+t \left\|u_t'\right\|_{L^2}^2\leq \left\|u_0'\right\|_{L^2}^2
	\]
	for all $t>0$.
\end{lemma}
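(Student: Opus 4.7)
My plan is to upgrade Lemma~\ref{lem:1dBounded} to an $H^1$ bound by running the same cutoff--Gronwall machinery on the differentiated equation to obtain monotonicity of $\|u_t'\|_{L^2}^2$, and then to couple that with the $L^2$ dissipation inequality via a time-integration trick.

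First, differentiating \eqref{eq:1dMCF} in $x$ gives $\partial_t u' = \partial_x\!\bigl(u''/(1-(u')^2)\bigr)$. Testing against $u'\varphi_R^2$ with the cutoff $\varphi_R(x)=\psi(x/R)$ used in Lemma~\ref{lem:1dBounded}, integrating by parts in $x$, and passing to the limit $R\to\infty$ produces the dissipation identity
\[
\frac{d}{dt}\|u_t'\|_{L^2}^2 \;=\; -2\int_{\R}\frac{(u'')^2}{1-(u')^2}\,dx \;\le\; 0,
\]
so $t\mapsto\|u_t'\|_{L^2}^2$ is nonincreasing and in particular bounded above by $\|u_0'\|_{L^2}^2$ for all $t\ge 0$. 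The coefficient $(1-(u')^2)^{-1}$ stays bounded by uniform spacelikeness $|u'|\le 1-\varepsilon$, and the localization error terms are absorbed exactly as in Lemma~\ref{lem:1dBounded}'s Gronwall step, with the $L^2$ control on $u$ feeding in from that lemma.

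Second, recall from the proof of Lemma~\ref{lem:1dBounded} the identity
\[
\frac{d}{dt}\!\int_{\R} u^2\,dx \;=\; -\int_{\R} u'\,\ln\tfrac{1+u'}{1-u'}\,dx,
\]
which, together with the pointwise inequality $x\ln\tfrac{1+x}{1-x}\ge 2x^2$ on $(-1,1)$ and the $R\to\infty$ limit, gives $\frac{d}{dt}\|u_t\|_{L^2}^2 \le -2\|u_t'\|_{L^2}^2$. Integrating in time and using the monotonicity from the first step (so that $\|u_s'\|_{L^2}^2\ge\|u_t'\|_{L^2}^2$ for $s\le t$) yields
\[
\|u_t\|_{L^2}^2 \;+\; 2t\,\|u_t'\|_{L^2}^2 \;\le\; \|u_t\|_{L^2}^2 + 2\!\int_0^t\!\|u_s'\|_{L^2}^2\,ds \;\le\; \|u_0\|_{L^2}^2,
\]
and combining this with $\|u_t'\|_{L^2}^2\le\|u_0'\|_{L^2}^2$ from the first step yields the asserted $H^1$ estimate.

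The main technical obstacle lies in making the $R\to\infty$ passage of the first step rigorous: the differentiated equation involves second and third derivatives of $u$, and the cutoff error terms of the form $\int\varphi_R\varphi_R'\,u'\cdot u''/(1-(u')^2)\,dx$ must be absorbed by a Gronwall loop mirroring the one in Lemma~\ref{lem:1dBounded} but now applied at one derivative higher. Once that scaffolding is in place, the coupling of the two dissipation inequalities through the monotonicity trick is completely routine.
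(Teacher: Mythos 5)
Your route is genuinely different from the paper's. The paper does not separate an $H^1$-monotonicity step from the $L^2$-dissipation step; instead it differentiates the single localized, time-weighted functional $\int\varphi^2u^2\,dx+\alpha t\int\varphi^2(u')^2\,dx$, absorbs the term $\int\varphi^2(u')^2\,dx$ created by differentiating the $t$-weight into the $L^2$-dissipation term $-\int\varphi^2u'\ln\tfrac{1+u'}{1-u'}\,dx$ (choosing $\alpha<1$ and $\delta$ small), and closes with one Gronwall argument before letting $R\to\infty$ and $\alpha\to1$. The payoff of that arrangement is that the initial value of the functional contains only $u_0^2$ --- the derivative part is annihilated by the factor $t$ --- so integrability of $u_0'$ is never used. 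Note also that both your argument and the paper's produce the bound with $\|u_0\|_{L^2}^2$ on the right-hand side; the $\|u_0'\|_{L^2}^2$ in the statement must be a misprint (it cannot be correct as written, since $\|u_t\|_{L^2}$ is not controlled by $\|u_0'\|_{L^2}$ --- consider a wide, flat bump), so your closing sentence claiming that adding $\|u_t'\|_{L^2}\le\|u_0'\|_{L^2}$ ``yields the asserted estimate'' should be dropped rather than defended.

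The genuine gap is in your first step. Under the hypotheses $u_0\in C^{0,1}(\R)\cap L^2(\R)$ one only knows $u_0'\in L^\infty$, not $u_0'\in L^2$, so ``bounded above by $\|u_0'\|_{L^2}^2$'' may be vacuous, and your proposed fix --- a Gronwall loop ``one derivative higher'' as in Lemma \ref{lem:1dBounded} --- does not close: the analogue of $A(0,R)$ would be $\sup_y\int\varphi_{R,y}^2(u_0')^2\,dx$, which is only $O(R)$ by the Lipschitz bound and blows up as $R\to\infty$, while the Gronwall factor merely tends to $1$. The repair is to decouple the steps: first integrate your $L^2$-dissipation inequality in time to get $\int_0^t\|u_s'\|_{L^2}^2\,ds\le\tfrac12\|u_0\|_{L^2}^2$, hence $\|u_s'\|_{L^2}<\infty$ for a.e.\ $s>0$; then run the localized dissipation of $(u')^2$ only between such an $s$ and $t$, estimating the cutoff cross term by $\int\varphi^2\tfrac{(u'')^2}{1-(u')^2}\,dx+C\int(\varphi')^2(u')^2\,dx\le[\text{dissipation}]+C/R$ using only $|u'|\le1$, so that letting $R\to\infty$ gives $\|u_t'\|_{L^2}\le\|u_s'\|_{L^2}$ with no Gronwall at all. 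With monotonicity from a.e.\ earlier times, $2t\|u_t'\|_{L^2}^2\le2\int_0^t\|u_s'\|_{L^2}^2\,ds$, and your final inequality $\|u_t\|_{L^2}^2+2t\|u_t'\|_{L^2}^2\le\|u_0\|_{L^2}^2$ follows; this is the intended statement (with a slightly better constant than the paper's).
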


\begin{proof}
	Let $\varphi$ be  as in the proof of Lemma \ref{lem:1dBounded}. Then 
	\begin{align*}
		\partial_t	&\left(t \int_{\Rb}\varphi^2(u')^2\der x\right)=\int_{\Rb}\varphi^2(u')^2\der x+2t\int_{\Rb}\varphi^2(u')(\partial_t u)'\der x\\
			&=\int_{\Rb}\varphi^2(u')^2\der x+2t\int_{\Rb}\varphi^2u'\left(\frac{u''}{1-(u')^2}\right)'\der x\\
			&=\int_{\Rb}\varphi^2(u')^2\der x -2t\int_{\Rb}\varphi^2\frac{(u'')^2}{1-(u')^2}\der x-4t\int_{\Rb}\varphi'\varphi u'\left(\frac{u''}{1-(u')^2}\right)\der x\\
			&\le \int_{\Rb}\varphi^2(u')^2\der x -2t\int_{\Rb}\varphi^2\frac{(u'')^2}{1-(u')^2}\der x\\
				&\qquad +\frac{2t}{\delta}\int_{\Rb}(\varphi')^2(u')^2\der x+2t\delta \int_{\Rb}\varphi^2\frac{(u'')^2}{(1-(u')^2)^2}\der x\\
			&\leq \int_{\Rb}\varphi^2(u')^2\der x+\frac{2t}{\delta}\int_{\Rb}(\varphi')^2(u')^2\der x -2t(1-C\delta)\int_{\Rb}\varphi^2\frac{(u'')^2}{1-(u')^2}\der x\\
			&\leq \int_{\Rb}\varphi^2(u')^2\der x+\frac{8t}{R^2\delta}\int_{-2R}^{2R}(u')^2\der x -2t(1-C\delta)\int_{\Rb}\varphi^2\frac{(u'')^2}{1-(u')^2}\der x \,,
	\end{align*}
	where $C$ depends on $\sup |u'_t|<1$. Now, pick $\alpha\in(0,1)$. Then, combining the above estimate
	with the one from the Lemma \ref{lem:1dBounded}, we obtain for any $\delta\in (0,(1-\alpha)C^{-1})$
	\begin{align*}
		\partial_t&\left(\int_{\Rb}\varphi^2u^2\der x+\alpha t \int_{\Rb}\varphi^2(u')^2\der x\right)
	\leq\frac{4}{\delta R^2}\int_{-2R}^{2R}u^2\der x+\frac{8\alpha t}{R^2\delta}\int_{-2R}^{2R}(u')^2\der x\\
			&\qquad -(1-\alpha-C\delta)\int_{\Rb}\varphi^2(u')^2\der x-2t(1-C\delta) \alpha \int_{\Rb}\varphi^2\frac{(u'')^2}{1-(u')^2}\der x\\
		&\leq \frac{4}{\delta R^2}\int_{-2R}^{2R}u^2\der x+\frac{8\alpha t}{R^2\delta}\int_{-2R}^{2R}(u')^2\der x \,.
	\end{align*}
	Analogously to the above, we define
	\begin{align*}
		A(t,R,\alpha)=\sup_{y\in\Rb}\left(\int_{\Rb}u^2\varphi_{R,y}^2\der x+\alpha t\int_{\Rb}(u')^2\varphi_{R,y}^2\der x\right) \,,
	\end{align*}
	where $\varphi_{R,y}(x)$ as in the proof of the previous lemma. After integration in time, we get
	\begin{align*}
		& \int_{\Rb}\varphi_{R,y}^2u_t^2\der x+\alpha t\int_{\Rb}\varphi_{R,y}^2(u'_t)^2\der x \\
			&\qquad \leq \int_{\Rb}\varphi_{R,y}^2u_0^2\der x
	+\frac{4}{\delta R^2}\int_0^t \int_{-2R}^{2R}u_s^2\der x\der s + \frac{8\alpha }{R^2\delta}\int_0^ts\int_{-2R}^{2R}(u'_s)^2\der x\der s  \\
			& \qquad \leq A(0,R,\alpha)+\frac{16}{\delta R^2}\int_0^tA(s,R,\alpha)\der s
	\end{align*}
	and taking the supremum over $x$ on the left-hand side yields
	\begin{align*}
		A(t,R,\alpha)\leq A(0,R,\alpha)+\frac{16}{\delta R^2}\int_0^tA(s,R,\alpha)\der s.
	\end{align*}
	By the Gronwall inequality, we obtain 
	\begin{align*}
		A(t,R,\alpha)\leq A(0,R,\alpha)\exp\left(\frac{16t}{\delta R^2}\right)
	\end{align*}
	and the result follows from letting $R\to\infty$ and then letting $\alpha\to 1$.
\end{proof}

\begin{proposition}\label{lem:1dConv}
	Let $u\colon \R\times (0,T)\to\infty$ be a smooth solution of \eqref{eq:1dMCF} with uniformly spacelike initial data $u_0\in C^{0,1}(\R)\cap L^2(\R)$.
	 Then $u_t\to0$ for $t\to\infty$,  uniformly in all derivatives. More precisely,
	\[
		\left\|u^{(k)}\right\|_{L^{\infty}}\leq C_k\cdot t^{-\frac{1}{4}}
	\]
	for some $C\ge 0$ and for all $t>0$.	
\end{proposition}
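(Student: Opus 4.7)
The plan is to derive the decay rate for $u$ itself (the $k=0$ case) from the one-dimensional Sobolev inequality combined with the energy bound in Lemma \ref{lem:1dDerivDecay}, and then to bootstrap to all spatial derivatives using parabolic regularity.

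\textbf{Step 1 ($L^\infty$ decay of $u$).} For every $f\in H^1(\R)$, integrating $(f^2)' = 2ff'$ from $-\infty$ to $x$ and applying Cauchy--Schwarz gives the one-dimensional Sobolev inequality
\[
	\|f\|_{L^\infty}^2 \leq 2\,\|f\|_{L^2}\,\|f'\|_{L^2}.
\]
By Lemmas \ref{lem:1dBounded} and \ref{lem:1dDerivDecay}, $u_t\in H^1(\R)$ for $t>0$, with $\|u_t\|_{L^2}$ uniformly bounded in $t$ and $\|u_t'\|_{L^2}\leq C\,t^{-1/2}$. Applying the Sobolev inequality to $f=u_t$ therefore yields
\[
	\|u_t\|_{L^\infty}^2 \leq C\,t^{-1/2},
\]
which is precisely the case $k=0$.

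\textbf{Step 2 (decay of all derivatives).} Since $|u'|\leq 1-\varepsilon$ uniformly, the equation \eqref{eq:1dMCF} is uniformly parabolic on $\R\times(0,\infty)$ with smooth coefficients. Interior parabolic regularity---Schauder estimates applied on parabolic balls of fixed size, translated in space and in time $\geq t_0$---provides, for every $k\in\N$ and every $t_0>0$, a uniform bound $\sup_{t\geq t_0,\,x\in\R}|u^{(k)}(x,t)|\leq M_k$. Combining this with the $L^\infty$ decay of Step 1 via the one-dimensional Gagliardo--Nirenberg interpolation
\[
	\|f^{(k)}\|_{L^\infty} \leq C\,\|f\|_{L^\infty}^{1-k/N}\,\|f^{(N)}\|_{L^\infty}^{k/N}, \qquad k<N,
\]
applied to $f=u_t$ with $N$ chosen sufficiently large relative to $k$, yields $\|u_t^{(k)}\|_{L^\infty}\leq C_k\,t^{-1/4}$, as claimed.

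\textbf{Main obstacle.} The substance of the argument is concentrated in the energy estimate of Lemma \ref{lem:1dDerivDecay}; once that is in hand, Step 1 is a one-line application of the Sobolev inequality and Step 2 is standard. A conceptually cleaner variant of Step 2 is the parabolic rescaling
\[
	\tilde u(y,s)\coloneqq u\bigl(x_0+\sqrt{t_0}\,y,\;t_0+t_0\,s\bigr),\qquad (y,s)\in(-1,1)\times(-\tfrac12,0),
\]
which satisfies a uniformly parabolic equation on the fixed cylinder with $\|\tilde u\|_{L^\infty}\leq C\,t_0^{-1/4}$ by Step 1. Parabolic Schauder estimates then give $\|\tilde u\|_{C^{2+\alpha}}\leq C\,t_0^{-1/4}$ on a smaller cylinder, and unfolding the rescaling produces the desired bounds at $(x_0,t_0)$ uniformly in $x_0$; this route incidentally delivers the faster rates $|u^{(k)}(x,t)|\leq C_k t^{-(1+2k)/4}$ that one expects for a uniformly parabolic equation, of which the statement $\|u^{(k)}\|_{L^\infty}\leq C_k\,t^{-1/4}$ is a weak consequence.
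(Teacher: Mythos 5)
Your Step 1 is exactly the paper's argument: the paper also combines the one-dimensional Gagliardo--Nirenberg/Sobolev inequality $\|u\|_{L^\infty}\leq C\|u'\|_{L^2}^{1/2}\|u\|_{L^2}^{1/2}$ with Lemma \ref{lem:1dDerivDecay} to get $\|u_t\|_{L^\infty}\leq Ct^{-1/4}$. For the higher derivatives, however, your primary route does not deliver the stated rate: with a \emph{fixed} $N>k$, the interpolation $\|u^{(k)}\|_{L^\infty}\leq C\|u\|_{L^\infty}^{1-k/N}\|u^{(N)}\|_{L^\infty}^{k/N}$ together with $\|u\|_{L^\infty}\leq Ct^{-1/4}$ and $\|u^{(N)}\|_{L^\infty}\leq M_N$ only gives $\|u^{(k)}\|_{L^\infty}\leq C_{k,N}\,t^{-(N-k)/(4N)}$, whose exponent is strictly smaller than $1/4$ for every finite $N$; ``taking $N$ large'' improves the exponent but never reaches $1/4$, so this version proves $t^{-1/4+\delta}$ for every $\delta>0$ but not the claimed bound. (This is the same reason the paper's Theorem \ref{thm_longtime_convergence} uses interpolation only for qualitative convergence, without a rate.) Your ``conceptually cleaner variant'' is the one that actually closes the argument, and it coincides in substance with what the paper does: the paper invokes short-time smoothing estimates of the form $\|u^{(k)}_{t+1}\|_{L^\infty}\leq C_k\|u_t\|_{L^\infty}$, which is your rescaled interior Schauder estimate run at unit scale rather than at parabolic scale $\sqrt{t_0}$ (your version, at the cost of the standard Krylov--Safonov-then-Schauder bootstrap to handle the quasilinear coefficient, even yields the sharper rates $t^{-(2k+1)/4}$). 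So the proof is correct provided you discard the fixed-$N$ interpolation step and rely on the smoothing estimate.
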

	
\begin{proof}
	By the one-dimensional  Gagliardo-Nirenberg inequality and Lemma \ref{lem:1dDerivDecay}, we obtain
	\[
		\left\|u\right\|_{L^{\infty}}\leq C\left\|u'\right\|_{L^2}^{\frac{1}{2}}\left\|u\right\|_{L^2}^{\frac{1}{2}}
		= Ct^{-\frac{1}{4}}(t^{\frac{1}{2}}\left\|u'\right\|_{L^2})^{\frac{1}{2}}\left\|u\right\|_{L^2}^{\frac{1}{2}} \leq Ct^{-\frac{1}{4}}\,.	
	\]
	For higher derivatives, the assertion follows from short-time derivative estimates of the form 
	\[
		\left\| u^{(k)}_{t+1}\right\|_{L^{\infty}}\leq C_k\cdot \left\| u_t\right\|_{L^{\infty}} \,,
	\]
	which finishes the proof.
\end{proof}

\begin{remark}
	The convergence rate in Proposition \ref{lem:1dConv} is the same as one gets for solutions of the $1$-dimensional heat equation with initial data in $L^2$. This can be seen as follows: If $u_0\in L^2(\R)$, the solution of the heat equation with initial data $u_0$ is given by 
	\begin{align*}
	u(x,t)=\int_{\R}K(y-x,t)u_0(y)\d y\,,\qquad K(x,t)=(4\pi t)^{-1/2}\cdot e^{-\frac{x^2}{4t}}\,,
	\end{align*}
	where $K$ is the $1$-dimensional heat kernel. A direct computation proves that the inequality $\left\|K(\cdot,t)\right\|_{L^2}\leq C\cdot t^{-1/4}$ holds. Finally Young's inequality for convolutions implies that
	\begin{align*}
	\left\|u(\cdot,t)\right\|_{L^{\infty}}\leq \left\|K(\cdot,t)\right\|_{L^2}\left\|u_0\right\|_{L^2}\leq C\cdot t^{-1/4}\cdot \left\|u_0\right\|_{L^2}\,,
	\end{align*}
	which is the estimate we claimed.
\end{remark}

\begin{theorem}
	Let $u\colon\R\times (0,T)\to\infty$ be a smooth, uniformly spacelike solution of \eqref{eq:1dMCF} with uniformly spacelike initial data 
	$u_0\in C^{0,1}(\R)$ such that $u_0(x)\to0$ as $|x|\to\infty$. Then, if $t\to\infty$, 
	$u_t\to0$ uniformly in 
	all derivatives. More precisely, there exist functions $v_k(t)$ with $v_k(t)\to 0$ as $t\to\infty$ such that
	\[
		\left\|u^{(k)}\right\|_{L^{\infty}}\leq v_k(t)
	\]
	for some $C\ge 0$ and for all $t>0$.		
\end{theorem}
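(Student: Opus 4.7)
The strategy is to reduce this to the previously proved $L^2$-initial-data case (Proposition \ref{lem:1dConv}) by sandwiching $u_0$ between compactly supported barriers, exploiting the key structural feature of \eqref{eq:1dMCF}: it is invariant under $u \mapsto u + c$, so arbitrary vertical shifts of any solution are again solutions.

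Fix $\varepsilon > 0$. Using $u_0(x) \to 0$, pick $R$ so that $|u_0(x)| \le \varepsilon$ for $|x| \ge R$, and construct compactly supported, uniformly spacelike Lipschitz functions $v_0^\pm \in C^{0,1}(\R) \cap L^2(\R)$ satisfying
\[
v_0^-(x) - \varepsilon \le u_0(x) \le v_0^+(x) + \varepsilon, \qquad x \in \R.
\]
For $v_0^+$ one may take the function equal to $\sup|u_0| - \varepsilon$ on $[-R,R]$, decreasing outside $[-R,R]$ with slope equal to the spacelikeness constant of $u_0$ down to $0$ on a sufficiently long transition interval, and zero thereafter: on $[-R,R]$ the inequality $v_0^+ + \varepsilon \ge u_0$ holds by the definition of $\sup|u_0|$; outside it holds because $u_0 \le \varepsilon$ there and $v_0^+ \ge 0$ by construction. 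One builds $v_0^-$ analogously. Let $v_t^\pm$ denote the corresponding global-in-time solutions of \eqref{eq:1dMCF}, furnished by Theorem \ref{thm_longtime_existence} (together with Remark \ref{rem_longtime_existence} in the one-dimensional setting). Since $v_0^\pm \in L^2(\R)$, Proposition \ref{lem:1dConv} applies and yields $\|v_t^\pm\|_{C^k} \to 0$ as $t \to \infty$ for every $k \in \N$.

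By the translation invariance, $v_t^+ + \varepsilon$ and $v_t^- - \varepsilon$ are themselves solutions of \eqref{eq:1dMCF}; all three of $u$, $v_t^+ + \varepsilon$, $v_t^- - \varepsilon$ are bounded, uniformly spacelike, and, by Theorem \ref{thm no lift-off} in its $n=1$ form noted in the remark following it, decay to their limiting value at spatial infinity at every fixed time. An exhaustion-by-intervals comparison argument — on each bounded interval the equation is uniformly parabolic with bounded coefficients, and the decay at spatial infinity guarantees that any positive supremum of the difference of two such solutions is attained at an interior point — then yields
\[
v_t^-(x) - \varepsilon \le u(x,t) \le v_t^+(x) + \varepsilon \qquad \text{for all } (x,t) \in \R \times [0,\infty).
\]
Letting first $t \to \infty$ (using Proposition \ref{lem:1dConv}) and then $\varepsilon \to 0$ proves $\|u_t\|_{L^\infty} \to 0$. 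Convergence in every $C^k$ follows exactly as at the end of the proof of Theorem \ref{thm_longtime_convergence}: the uniform-in-time bounds on all derivatives of $u$, supplied by the preserved uniform spacelikeness together with standard interior parabolic Schauder theory, combine with the $L^\infty$ decay via the interpolation $\|\nabla f\|_\infty^2 \le c\,\|f\|_\infty\,\|\nabla^2 f\|_\infty$ to produce the desired moduli $v_k(t) \to 0$.

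The step I expect to be most delicate is the rigorous justification of the comparison principle on all of $\R$: the difference $u - v_t^+ - \varepsilon$ is not a priori integrable, so no direct energy argument is available. The workaround is the no-lift-off property at every fixed time, which forces any positive supremum of this difference to be attained on a bounded interval; once localized there, the standard parabolic maximum principle closes the argument.
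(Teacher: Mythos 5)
Your proposal is correct and follows essentially the same route as the paper: sandwich $u_0$ between compactly supported (hence $L^2$) uniformly spacelike barriers shifted by $\varepsilon$, propagate the inequality by the comparison principle using translation invariance of \eqref{eq:1dMCF}, invoke Proposition \ref{lem:1dConv} for the decay of the barrier solutions, and upgrade to all derivatives by interpolation. Your extra care in justifying the maximum principle on the non-compact line via the spatial decay of all solutions involved is a welcome elaboration of a step the paper leaves implicit.
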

	
\begin{proof}
	For each $\varepsilon>0$, pick a compactly supported smooth function $v_{0,\varepsilon}$ such that 
	\begin{align*}
		-v_{0,\varepsilon}-\frac{\varepsilon}{2}\leq u_0\leq v_{0,\varepsilon}+\frac{\varepsilon}{2}\,.
	\end{align*}
	By the maximum principle, the corresponding solutions satisfy
	\begin{align*}
		-v_{\varepsilon}-\frac{\varepsilon}{2}\leq u\leq v_{\varepsilon}+\frac{\varepsilon}{2}
	\end{align*}
	for all times. As $v_{\varepsilon}\to 0$, we conclude that $|u|\leq \varepsilon$ for all $t\geq T(\varepsilon)$ 
	and some large time $T(\varepsilon)$, and the assertion for $u$ follows for $k=0$. For the higher derivatives, 
	the claim follows from standard estimates.
\end{proof}

\begin{bibdiv}  
\begin{biblist}

\bib{CSS07}{article}{
	title = {Stability of Translating Solutions to Mean Curvature Flow},
author={Clutterbuck, Julie},
author={Schn\"{u}rer, Oliver C.},
author={Schulze, Felix},
	year = {2007},
	VOLUME = {29},
	NUMBER = {3},
	PAGES = {281--293},
	JOURNAL = {Calc. Var. Partial Differential Equations},
	FJOURNAL = {Calculus of Variations and Partial Differential Equations},
	opteprint = {arXiv:math.AP/0509372},
	optnote = {{\tt arXiv:math.AP/0509372}},
}

\bib{colding2015mean}{article}{
	title={Mean curvature flow},
	author={Tobias Colding},
	author={William Minicozzi},
	author={Erik Pedersen},
	journal={Bulletin of the American Mathematical Society},
	volume={52},
	number={2},
	pages={297--333},
	year={2015}
}

\bib{EH91a}{article}{
	author={Ecker, Klaus},
	author={Huisken, Gerhard},
	title={Interior estimates for hypersurfaces moving by mean curvature},
	journal={Invent.\ Math.},
	volume={105},
	number={1},
	pages={547--569},
	date={1991},
}

\bib{EH91b}{article}{
	author={Ecker, Klaus},
	author={Huisken, Gerhard},
	title={Parabolic methods for the construction of spacelike slices of
		prescribed mean curvature in cosmological spacetimes},
	journal={Comm. Math. Phys.},
	volume={135},
	date={1991},
	number={3},
	pages={595--613},
	issn={0010-3616},
%	review={\MR{1091580}},
}

\bib{Ecker93}{article}{
	author={Ecker, Klaus},
	title={On mean curvature flow of spacelike hypersurfaces in
		asymptotically flat spacetimes},
	journal={J. Austral. Math. Soc. Ser. A},
	volume={55},
	date={1993},
	number={1},
	pages={41--59},
	issn={0263-6115},
%	review={\MR{1231693}},
}

\bib{Ecker97}{article}{
	title={Interior estimates and longtime solutions for mean curvature flow of noncompact spacelike hypersurfaces in Minkowski space},
	author={Ecker, Klaus},
	journal={J. Differential Geom},
	volume={46},
	number={3},
	pages={481--498},
	year={1997}
}

\bib{Ger00}{article}{
	author={Gerhardt, Claus},
	title={Hypersurfaces of prescribed mean curvature in Lorentzian
		manifolds},
	journal={Math. Z.},
	volume={235},
	date={2000},
	number={1},
	pages={83--97},
	issn={0025-5874},
%	review={\MR{1785072}},
%	doi={10.1007/s002090000124},
}

\bib{Lam17}{article}{
	author={Lambert, Ben},
	title={Construction of maximal hypersurfaces with boundary conditions},
	journal={Manuscripta Math.},
	volume={153},
	date={2017},
	number={3-4},
	pages={431--454},
	issn={0025-2611},
%	review={\MR{3662054}},
%	doi={10.1007/s00229-016-0896-1},
}
\end{biblist}
\end{bibdiv}

\end{document}